\numberwithin{equation}{section}
\theoremstyle{plain}
\newtheorem{theorem}{Theorem}[section]
\newtheorem{proposition}[theorem]{Proposition}
\newtheorem{corollary}[theorem]{Corollary}
\theoremstyle{definition}
\newtheorem{definition}[theorem]{Definition}
\newtheorem{example}[theorem]{Example}
\newtheorem{remark}[theorem]{Remark}
\definecolor{newblue}{rgb}{0.27, 0.32, 0.86}
\definecolor{newred}{rgb}{0.86, 0.32, 0.27}
\providecommand{\keywords}[1]{\textbf{\textit{Key words and phrases:}} #1}
\providecommand{\subjclass}[1]{\textbf{\textit{2020 Mathematics Subject Classification:}} #1}
\title[\tiny Minimal generating sets of moves for surfaces immersed surfaces]{Minimal generating sets of moves for surfaces\\ immersed in the four-space}
\author{Micha\l \;Jab\l onowski}
\address{Institute of Mathematics, Faculty of Mathematics, Physics and Informatics, University of Gda\'nsk, 80-308 Gda\'nsk, Poland}
\email{michal.jablonowski@gmail.com}
\subjclass{57K45, Secondary: 57Q35, 57R42} 
\keywords{immersed surface, marked graph diagram, surface-link, link with bands, minimal set of moves}
\date{\today}
\begin{document}
	
\maketitle

\begin{abstract}
For immersed surfaces in the four-space, we have a generating set of the Swenton--Hughes--Kim--Miller spatial moves that relate singular banded diagrams of ambient isotopic immersions of those surfaces. We also have Yoshikawa--Kamada--Kawauchi--Kim--Lee planar moves that relate marked graph diagrams of ambient isotopic immersions of those surfaces. One can ask if the former moves form a minimal set and if the latter moves form a generating set. In this paper, we derive a minimal generating set of spatial moves for diagrams of surfaces immersed in the four-space, which translates into a generating set of planar moves. We also show that the complements of two equivalent immersed surfaces can be transformed one another by a Kirby calculus not requiring the 1-1-handle or 2-1-handle slides. We also discuss the fundamental group of the immersed surface-link complement in the four-space and a quandle coloring invariant of an oriented immersed surface-link.
\end{abstract}


\section{Introduction}

There is a set of four spatial moves: \emph{cup, cap, band slide, band swim} for surface-links embedded in $\mathbb{R}^4$ proven by F.J.~Swenton \cite{Swe01} (and later clarified by C.~Kearton and V.~Kurlin \cite{KeaKur08}), to be a generating set of spatial moves between any diagrams of equivalent embedded surface-links. It was proved later to be a minimal generating set \cite{Jab20}. It was recently extended by a set of three moves \emph{intersection/band slide, intersection/band pass, intersection/band swim} by M.~Hughes, S.~Kim, and M.~Miller \cite{HKM21} to form a generating set of moves relating equivalent immersed surface-links.
\par
There is a set of $11$ planar moves: $\{\text{planar isotopy, }\Omega_1, \ldots, \Omega_8, \Omega_4',\Omega_6'\}$ for surface-links embedded in $\mathbb{R}^4$  introduced by K.~Yoshikawa \cite{Yos94}, and recently extended by a set of three moves $\{\Omega_9, \Omega_9',\Omega_{10}\}$ by S.~Kamada, A.~Kawauchi, J.~Kim, and S.Y.~Lee \cite{KKKL18} to include moves relating equivalent immersed surface-links.
\par
It is natural to ask if the set $\mathcal A = \{$spatial isotopy, cup, cap, band slide, band swim, intersection/band slide, intersection/band pass, intersection band swim$\}$ is a minimal set, in the sense that none of these moves are a combination of other moves from the set $\mathcal A$. We negatively answer this question and show that exactly one of those moves is not necessary.

\begin{theorem}\label{tw:mainA}
	The set $\mathcal C = \{\text{spatial isotopy, cup, cap, band slide, band swim},\\ \text{intersection/band slide, intersection/band pass}\}$ is a minimal generating set for $\mathcal A$.
\end{theorem}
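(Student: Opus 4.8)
The statement has two halves, and I would prove them separately: (i) \emph{generation} --- the intersection/band swim move is a finite composition of moves from $\mathcal C$; and (ii) \emph{minimality} --- for every $m\in\mathcal C$ there are two singular banded diagrams of a single immersed surface related by one application of $m$ but by no sequence of moves from $\mathcal C\setminus\{m\}$. Throughout, the point is that all eight moves of $\mathcal A$ preserve the ambient isotopy class, so every quantity used to separate moves must be an invariant of the \emph{diagram}, not of the surface.

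For (i) I would give an explicit local movie. Away from the double point curve that it involves, an intersection/band swim is literally an ordinary band swim; the only extra datum is a double point curve sitting in the region across which the band is swum. So the plan is: first use intersection/band slide moves (and, if the double point curve threads the band the ``wrong'' way, an intersection/band pass) to push that double point curve out of the path of the band; then perform the ordinary band swim; then reverse the intersection moves to carry the double point back to where it started. Spatial isotopy is used only to line up the frames. The beginning and end frames of this movie are exactly the two sides of the intersection/band swim relation, and only band swim, intersection/band slide, and intersection/band pass have been used, so $\mathcal C$ generates $\mathcal A$. (This trick has no counterpart for the plain band swim, which is why band swim must stay: there is no cheap move relocating whatever obstructs a plain band swim.)

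For (ii) I would treat the two groups of moves differently. For spatial isotopy, cup, cap, band slide, and band swim I would re-use the diagram invariants of \cite{Jab20} --- for instance the number of components of the trivial link at the bottom (resp.\ top) level of the band presentation detects cup (resp.\ cap), and the link-type quantities used there detect band slide and band swim --- so the only new work is to check that the three intersection moves, being supported in a neighbourhood of a double point curve, leave all of those quantities unchanged. For intersection/band slide and intersection/band pass the arguments are new and must moreover separate these two moves from \emph{each other}: here I would build invariants from the double point set and the crossing data of the diagram (such as the number of double point curves, or a count recording how a band meets a sheet near a double point), chosen so that one of the two moves changes it while the other move, and all five embedded moves, do not; then verify move by move that the remaining six moves of $\mathcal C$ indeed fix it. Finally, the corresponding statement for planar moves follows by transporting this minimal generating set through the dictionary between singular banded diagrams and marked graph diagrams.

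I expect the main obstacle to be the two new minimality arguments in (ii): since intersection/band slide and intersection/band pass are both ``band meets double point'' moves, the separating invariants have to be sensitive to the fine structure of the double point set, and confirming their invariance under band slide, band swim, and the other intersection move calls for a careful case-by-case inspection of the local diagrams.
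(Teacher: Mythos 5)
Your overall architecture (show intersection/band swim is redundant; show the remaining seven moves are independent) matches the paper, and you correctly single out intersection/band swim as the one disposable move. However, both halves of your argument have genuine gaps. For the generation step, the plan ``push the double point out of the path of the band, do an ordinary band swim, push it back'' does not work as stated: the double points of a self-transverse immersed surface-link are \emph{isolated} (there is no double point curve), and in the diagram each one is a $4$-valent vertex lying \emph{on} the link itself. The moves intersection/band pass ($M_5$) and intersection/band slide ($M_6$) only move a vertex relative to a band; they cannot detach the vertex from the two strands through it, so there is no configuration ``with the vertex out of the way'' to which a plain band swim applies. The paper's actual derivation needs an extra ingredient you omit: an auxiliary move $M_b$ built from a \emph{cap move} $M_2$ together with $M_5$ and spatial isotopy (Figure \ref{michal_02}); the move $(x)$ is then realized as $M_b$, followed by $M_4$ plus isotopy, followed by $M_b$ again (Figure \ref{michal_12}). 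Without the cap move (or some equivalent trick) your movie does not close up.

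For minimality, your treatment of the five embedded moves is workable but more laborious than necessary: since $M_5$ and $M_6$ each require a singular vertex and the witness diagrams of \cite{Jab20} have none, those moves simply cannot occur in the relevant derivations, and the minimality of $\{M_0,\dots,M_4\}$ transfers immediately. The real gap is the independence of $M_5$ and $M_6$ from each other and from the rest: you defer the construction of the separating invariants, and the candidates you name cannot succeed --- the number of double points is preserved by \emph{every} move in $\mathcal A$, so no such count distinguishes anything. The paper instead defines semi-invariants $f^{M_5},f^{M_6}$ valued in ambient isotopy classes of classical links (modulo split summands that are trivial knots), obtained by locally resolving each band and each singular vertex in a prescribed way (Figure \ref{michal_08}), checks invariance under the other six moves, and exhibits explicit pairs $D_1^k,D_2^k$ of diagrams of equivalent immersed surface-links on which the value jumps from an unlink to a Hopf link (Figure \ref{michal_13}). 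This construction is the substantive new content of the theorem and is missing from your proposal. (Your closing remark about transporting the result to planar moves belongs to Theorem \ref{tw:mainB}, not to this statement.)
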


It is also natural to ask if the set $\mathcal B=\{\text{planar isotopy, }\Omega_1, \ldots, \Omega_{10}, \Omega_4',\Omega_6',\Omega_9'\}$ of $14$ moves is a generating set, in the sense that it relates any two marked graph diagrams of equivalent immersed surface-links. We propose a different set of moves $$\mathcal D=\{\text{planar isotopy, }\Omega_1, \ldots, \Omega_{12}, \Omega_4',\Omega_6', \Omega_9',\Omega_{11}',\Omega_{12}'\},$$ where the moves $\{\Omega_1, \ldots, \Omega_8, \Omega_4',\Omega_6'\}$ agree with the Yoshikawa moves, and new $\{\Omega_9, \Omega_9',\Omega_{10},\Omega_{11},\Omega_{11}',\Omega_{12},\Omega_{12}'\}$ moves and prove their sufficiency as follows.

\begin{theorem}\label{tw:mainB}
	Two surface-links immersed in $\mathbb{R}^4$ are equivalent if and only if, their singular marked graph diagrams are related by a combination of moves from $\mathcal D$.
\end{theorem}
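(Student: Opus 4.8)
The plan is to reduce Theorem~\ref{tw:mainB} to Theorem~\ref{tw:mainA} via the standard dictionary between marked graph (Yoshikawa-type) diagrams and banded (hyperbolic-splitting / ch-) diagrams of immersed surface-links. Recall that a marked graph diagram is a 4-valent planar diagram with marked vertices, and each such diagram is obtained from a banded surface diagram by taking a generic level hyperplane section: maxima become $\cup$-arcs, minima become $\cap$-arcs, the bands become the marked vertices, and the double-point arcs of the immersion become the singular crossings that distinguish a \emph{singular} marked graph diagram from an ordinary one. This correspondence is a bijection up to the respective notions of equivalence, so it suffices to show that each spatial move in the minimal generating set $\mathcal C$ of Theorem~\ref{tw:mainA} is realized, at the level of level sections, by a finite composition of the planar moves in $\mathcal D$, and conversely that each move in $\mathcal D$ is realized by an ambient isotopy of the corresponding immersed surface (hence by moves in $\mathcal A$, equivalently $\mathcal C$).

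First I would set up the translation carefully: fix the conventions for reading a banded immersed surface diagram as a marked graph diagram (heights of cups/caps, orientation of band cores, how a double-point arc is swept to produce a sequence of singular crossings), and record which of the $\Omega_i$ moves are the ``local'' Reidemeister-type moves on the underlying graph ($\Omega_1,\ldots,\Omega_8,\Omega_4',\Omega_6'$, the Yoshikawa moves, already known to encode ambient isotopy of embedded banded diagrams by Yoshikawa and by Kamada--Kawauchi--Kim--Lee), which involve the marked vertex ($\Omega_6,\Omega_6'$ and the band-related ones), and which are the genuinely new moves $\Omega_9,\Omega_9',\Omega_{10},\Omega_{11},\Omega_{11}',\Omega_{12},\Omega_{12}'$ handling the double-point arcs. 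Then, for the ``only if'' direction, I would take two equivalent immersed surface-links, apply Theorem~\ref{tw:mainA} to get a finite sequence of moves from $\mathcal C$ relating their banded diagrams, and go move-by-move: spatial isotopy is handled by planar isotopy together with $\Omega_1,\ldots,\Omega_8,\Omega_4',\Omega_6'$; cup and cap correspond to the birth/death of a small trivial component and are handled by (a subset of) the $\Omega$-moves that create or delete an isolated crossing-free loop with a marked vertex; band slide and band swim correspond to $\Omega_7,\Omega_8$-type moves (sliding/swimming the marked vertex); and the two intersection moves (intersection/band slide and intersection/band pass) are exactly what $\Omega_9,\Omega_9'$ (and $\Omega_{10},\Omega_{11},\Omega_{11}',\Omega_{12},\Omega_{12}'$, needed to move a singular crossing past the other diagrammatic features — maxima, minima, and marked vertices — as its height varies) are designed to realize. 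For the ``if'' direction I would check, one move at a time, that each move in $\mathcal D$ lifts to an ambient isotopy of the banded surface in $\mathbb{R}^4$, so by Theorem~\ref{tw:mainA} (or directly by Swenton--Hughes--Kim--Miller) the immersed surface-links are equivalent; this direction is essentially bookkeeping since every planar move is, by construction, a shadow of a local isotopy.

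I expect the main obstacle to be the ``only if'' direction's intersection moves: when a band slide or a band pass in the spatial picture drags a sheet carrying a double-point arc across another sheet, the induced change on level sections is not a single planar move but a choreographed sequence — the singular crossing must be shuttled up or down through cups, caps, and marked vertices, and this is precisely where the auxiliary moves $\Omega_{10},\Omega_{11},\Omega_{11}',\Omega_{12},\Omega_{12}'$ must be invoked and shown to be sufficient (no further move is needed). Making this rigorous requires a Morse-theoretic / movie argument: isotope the spatial move to be in general position with respect to the height function except at finitely many critical moments, and at each critical moment identify the local change of the level section with one move from $\mathcal D$. A secondary subtlety is well-definedness of the whole dictionary — that changing the banded diagram by the moves of $\mathcal C$ and then taking sections gives the \emph{same} sequence of marked graph diagrams up to the planar-isotopy move $\Omega_0$ — which I would dispatch by appealing to the uniqueness statements already established for the embedded case (Yoshikawa, Kearton--Kurlin, Kamada et al.) and extending them across double-point arcs, the new feature being handled uniformly by the observation that a double-point arc has no critical points of its own and therefore contributes only the crossing-transport moves listed above.
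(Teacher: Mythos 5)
Your overall strategy---reduce to Theorem \ref{tw:mainA} and translate each spatial move in $\mathcal C$ into planar moves in $\mathcal D$, then check conversely that each planar move lifts to an ambient isotopy---is exactly the route the paper takes: it shrinks each band into a small disk and replaces it by a marked $4$-valent vertex, invokes Kearton--Kurlin for the fact that $\{\Omega_1,\ldots,\Omega_8,\Omega_4',\Omega_6'\}$ already generates the planar projections of all embedded banded-unlink moves, and then treats the new singular features separately.

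There is, however, a genuine conceptual gap in how you set up the dictionary for the immersed part. A self-transverse immersed surface in $\mathbb{R}^4$ has \emph{isolated} double points (the dimension count is $2+2-4=0$), not double-point arcs; each such point appears in the singular banded link as a single rigid $4$-valent singular vertex sitting in the middle cross-section alongside the marked vertices coming from bands. Your ``main obstacle'' paragraph---shuttling a singular crossing up and down through cups, caps and marked vertices as the height of a double-point arc varies---is therefore addressing a difficulty that does not arise, and it misses the two difficulties that do: (i) spatial isotopy of a link carrying a rigid $4$-valent vertex must be shown to project to a \emph{complete finite} list of planar moves, which is where the paper invokes Kauffman's rigid-vertex graph isotopy \cite{Kau89} to obtain $\Omega_9,\Omega_9',\Omega_9'',\Omega_9''',\Omega_{10},\Omega_{10}'$ and then shows the redundant variants are generated by the others; and (ii) the non-isotopy spatial moves $M_5$ (passing a vertex past the edge of a band) and $M_6$ (sliding a vertex over a band) must each be given planar shadows, which is precisely what $\Omega_{11},\Omega_{11}'$ and $\Omega_{12},\Omega_{12}'$ are, with the primed versions then derived from the unprimed ones together with spatial-isotopy moves. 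Without identifying the rigid-vertex isotopy ingredient (or carrying out an equivalent general-position argument that actually produces and proves complete a finite list of local moves for the singular vertex), your plan does not yet establish that $\mathcal D$ suffices.
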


This paper is organized as follows. In Section \ref{sec1}, we give preliminaries of surface-links and the motion picture method. In Section \ref{sec2} we review the singular banded unlink method of presenting immersed surface-links and we prove Theorem \ref{tw:mainA}. In Section \ref{sec3} we review the singular marked graph diagram method of presenting immersed surface-links and we prove Theorem \ref{tw:mainB}. In Section \ref{sec4} we review the group of an immersed surface-link with a method to obtain its presentation from a singular marked graph diagram. In Section \ref{sec5} we derive oriented planar moves for the oriented-surface case. In Section \ref{sec6} we discuss a a quandle coloring invariant for an oriented immersed surface-link.

\section*{Acknowledgements}

The author would like to thank the referee for carefully
reading the paper. "This research was funded in whole or in part by NCN 2023/07/X/ST1/00157. For the purpose of Open Access, the author has applied a CC-BY public copyright licence to any Author Accepted Manuscript (AAM) version arising from this submission."

\section{Preliminaries}\label{sec1}

Let $X,Y$ be smooth ($C^{\infty}$) manifolds. Let $f:X^n\to Y^{m}$ be a smooth map. It is called an \emph{immersion} if at each point $x\in X$ the induced differential $df$ (a map between tangent spaces) is a monomorphism. By the Whitney immersion theorem, any smooth map $f:X^n\to Y^{m}$ can be approximated homotopically with arbitrary accuracy by an immersion when $m\geq 2n$. We consider smooth immersions $f:X^n\to Y^{2n}$ such that the following three conditions are satisfied: (i) $\#|f^{-1}(f(x))|\leq 2$, (ii) there is only finite number of points with $\#|f^{-1}(f(x))|= 2$, (iii) at each singularity $p=f(x)=f(y)$, there is a coordinate chart around $p$ where the two coordinate subspaces $\mathbb{R}^n\times 0$ and $0 \times\mathbb{R}^n$ are exactly the immersed images of $f$ near $x$ and $y$ respectively. That is the map is "self-transverse". By general position theorems for maps, any smooth map $f:X^n\to Y^{2n}$ can be approximated homotopically with arbitrary accuracy by an immersion described above.
\par
Such an immersion (or its image when no confusion arises) of a closed (i.e. compact, without boundary) surface $F$ into the Euclidean $\mathbb{R}^4$ (or into the $\mathbb{S}^4=\mathbb{R}^4\cup\{\infty\}$) is called an \emph{immersed surface-link} (or \emph{immersed surface-knot} if it is connected). 
\par
Two immersed surface-links are \emph{equivalent} if there exists an orientation preserving homeomorphism of the four-space $\mathbb{R}^4$ to itself (or equivalently auto-homeomorphism of the four-sphere $\mathbb{S}^4$), mapping one of those surfaces onto the other. We will use a word \emph{classical} referring to the theory of embeddings of circles $S^1\sqcup\ldots\sqcup S^1\hookrightarrow \mathbb{R}^3$ modulo ambient isotopy in $\mathbb{R}^3$ with their planar or spherical regular projections.
\par
To describe immersed surface-links in $\mathbb{R}^4$, we will use \emph{hyperplane cross-sections} $\mathbb{R}^3\times\{t\}\subset\mathbb{R}^4$ for $t\in\mathbb{R}$, denoted by $\mathbb{R}^3_t$. This method (called \emph{motion picture method}) introduced by Fox and Milnor was presented in \cite{Fox62}. By a general position argument, the intersection of $\mathbb{R}^3_t$ and a surface-link $F$ can (except in finite cases) be either empty or a classical link. In the finite cases, the intersection can be a single point (corresponding to a \emph{maximal point} or to a \emph{minimal point}) or a four-valent embedded graph, where each vertex corresponds to a \emph{saddle point} or to a \emph{singular point}. For more introductory material on this topic in an embedded case refer to \cite{Kam17}, and in the immersed case refer to \cite{HKM21}.

\section{Singular banded unlinks}\label{sec2}

\subsection{Links with bands}

\begin{definition}
	A \emph{singular link} $L$ in $\mathbb{R}^3$ is the image of an immersion in the classical case $\iota: S^1 \sqcup \cdots \sqcup S^1 \rightarrow \mathbb{R}^3$ which is injective except at isolated double points that are not tangencies (i.e. a $4$-valent fat-vertex graph smoothly embedded in $\mathbb{R}^3$.). At every double point $p$ we include a small disk $v\cong D^2$ embedded in $\mathbb{R}^3$ such that $(v,v \cap L) \cong (D^2,\{(x,y)\in D^2\,|\,xy=0\})$.  We refer to these disks as the {\emph{vertices}} of $L$. The double points of a singular link $L$ correspond to the isolated double points of an immersed surface in $\mathbb{R}^4$.
	\par
	A \emph{marked singular link} $(L, \sigma)$ in $\mathbb{R}^3$ is a singular link $L$ along with decorations $\sigma$ on the vertices of $L$, as follows: say that $v$ is a vertex of $L$, with $\partial v\cap \overline{(L\setminus v)}$ consisting of the four points $p_1, p_2, p_3, p_4$ in cyclic order. Choose a co-orientation of the disk $v$. On the positive side of $v$, add an arc connecting $p_1$ and $p_3$. On the negative side of $v$, add an arc connecting $p_2$ and $p_4$. A choice of $\sigma$ involves making a fixed choice of decoration on $v$, for all vertices $v$ of $L$.
\end{definition}

Let $L_+$ denote the classical link obtained from $(L,\sigma)$ by pushing the arc of $L$ between $p_1$ and $p_3$ off $v$ in the positive direction, and repeating for each vertex in $L$. We call $L_+$ the {\emph{positive resolution}} of $(L,\sigma)$. Similarly, let $L_-$ denote the classical link obtained from $(L,\sigma)$ by pushing the arc of $L$ between $p_1$ and $p_3$ off $v$ in the negative direction, and repeating for each vertex in $L$. We call $L_-$ the {\emph{negative resolution}} of $(L,\sigma)$ (see Figure \ref{michal_11}).

\begin{figure}
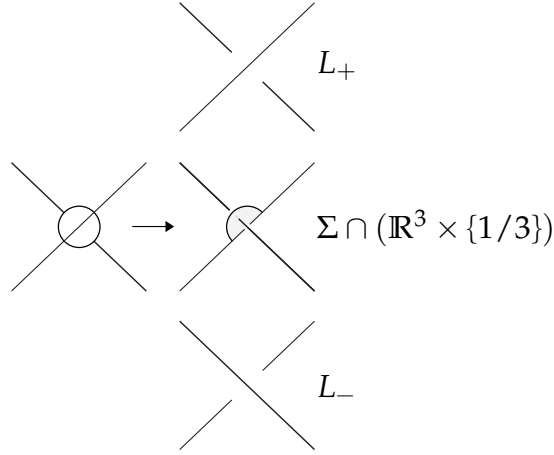

	\begin{center}
		\begin{lpic}[r(3cm)]{michal_11(4cm)}
			\lbl[l]{75,95;$L_+$}
			\lbl[l]{75,55;$\Sigma \cap (\mathbb{R}^3 \times \{1/3\})$}
			\lbl[l]{75,15;$L_-$}
		\end{lpic}
		
		\caption{A vertex of a marked singular link and the corresponding surface cuts (\protect\cite{HKM21}).\label{michal_11}}
	\end{center}
\end{figure}

Let $L$ be a singular link, and let $\delta_L$ denote the union of the vertices of $L$.  A \emph{band} $b$ attached to $L$ is the image of an embedding $\phi : I \times I \hookrightarrow \mathbb{R}^3\backslash \delta_L$, where $I = [-1,1]$, and $b \cap L = \phi (\{ -1,1\} \times I)$. Let $L_b$  be the singular link defined by 
\[
L_b = (L \backslash \phi (\{-1,1\} \times I)) \cup \phi (I \times \{-1,1\}).
\]
Then we say that $L_b$ is the result of performing \emph{band surgery} to $L$ along $b$.  If $B$ is a finite family of pairwise disjoint bands for $L$, then we will let $L_B$ denote the link we obtain by performing band surgery along each of the bands in $B$.  We say that $L_B$ is the result of \emph{resolving} the bands in $B$. A \emph{singular link with bands} $LB$ in $\mathbb R^3$ is a pair $(L, \sigma, B)$ consisting of a singular link $L$ in $\mathbb R^3$ with a decoration $\sigma$ and a finite set $B=\{b_1, \dots, b_n\}$ of pairwise disjoint $n$ bands spanning $L$.
\par
Let $(L, \sigma, B)$ be a singular link with bands $B=\{b_1, \dots, b_n\}$ and $\Delta_1,\ldots,\Delta_a \subset \mathbb{R}^3$ be mutually disjoint $2$-disks with $\partial(\cup_{j=1}^a\Delta_j)= L_{B+}$, and let $\Delta_1',\ldots,\Delta_b' \subset \mathbb{R}^3$ be mutually disjoint $2$-disks with $\partial(\cup_{k=1}^b\Delta_k')= L_-$. For each vertex $v$ of $L$, these two opposite push-offs form a bigon in a neighborhood of $v$, which bounds an embedded disk $D_v$.  This disk $D_v$ can be chosen so that its interior intersects $L$ transversely in a single point near $v$.  For each vertex $v$ select such a disk $D_v$ (ensuring that all of these disks are pairwise disjoint) and let $D_L$ denote the union of all of these embedded disks.
\par
We define $\Sigma \subset \mathbb{R}^3 \times \mathbb{R} = \mathbb{R}^4 $ an \emph{immersed surface-link corresponding to} $(L, \sigma, B)$ by the following cross-sections.

$$
(\mathbb{R}^3_t, \Sigma \cap \mathbb{R}^3_t)=\left\{%
\begin{array}{ll}
	(\mathbb R^3, \emptyset) & \hbox{for $t > 1$,}\\
	(\mathbb R^3, L_{B+} \cup (\cup_{j=1}^a\Delta_j)) & \hbox{for $t = 1$,} \\
	(\mathbb R^3, L_{B+}) & \hbox{for $0 < t < 1$,} \\
	(\mathbb R^3, L_+\cup (\cup_{i=1}^n b_i)) & \hbox{for $t = 0$,} \\
	(\mathbb R^3, L_+) & \hbox{for $-1/2 < t < 0$,} \\
	(\mathbb R^3, L_- \cup D_L) & \hbox{for $t = -1/2$,} \\
	(\mathbb R^3, L_-) & \hbox{for $-1 < t < -1/2$,} \\
	(\mathbb R^3, L_- \cup (\cup_{k=1}^b\Delta_k')) & \hbox{for $t = -1$,} \\
	(\mathbb R^3, \emptyset) & \hbox{for $ t < -1$.} \\
\end{array}
\right.
$$

It is known that the singular surface-link type of $\Sigma$ does not depend on choices of trivial disks (cf. \cite{KSS82}). The \emph{singular band moves} (illustrated in Figures \ref{michal_03}--\ref{michal_09}) on a singular link with bands are:

\begin{enumerate}
	\item[$(M_0)$] spatial isotopy (i.e. isotopy in $\mathbb{R}^3$),
	\item[$(M_1)$] cup moves,
	\item[$(M_2)$] cap moves,
	\item[$(M_3)$] band slides,
	\item[$(M_4)$] band swims,
	\item[$(M_5)$] passing a vertex past the edge of a band,
	\item[$(M_6)$] sliding a vertex over a band,
	\item[$(x)$] swimming a band through a vertex.
\end{enumerate}

\begin{figure}[ht]
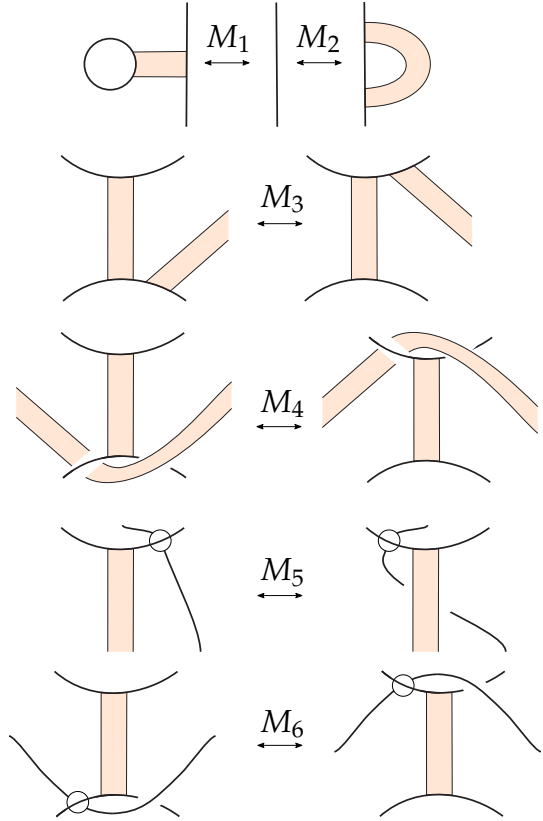

	\begin{center}
		\begin{lpic}[]{michal_03(7cm)}
			\lbl[b]{49,172;$M_1$}
			\lbl[b]{69,172;$M_2$}
			\lbl[b]{61,136;$M_3$}
			\lbl[b]{61,90;$M_4$}
			\lbl[b]{61,52;$M_5$}
			\lbl[b]{61,18;$M_6$}
		\end{lpic}
		\caption{Singular band moves $\{M_1, \ldots, M_6\}$.\label{michal_03}}
	\end{center}
\end{figure}

Let $\mathcal A = \{\text{spatial isotopy, cup, cap, band slide, band swim},\\ \text{intersection/band slide, intersection/band pass, intersection/band swim}\}$.

\begin{theorem}\cite{HKM21}
	Two surface-links immersed in $\mathbb{R}^4$ are equivalent if and only if, their banded unlink diagrams are related by a combination of moves from the set $\mathcal A$.
\end{theorem}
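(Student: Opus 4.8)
The plan is to follow the strategy F.\,J.~Swenton used in the embedded case \cite{Swe01} (as clarified by C.~Kearton and V.~Kurlin \cite{KeaKur08}), augmented by a one-parameter analysis of the double-point locus as in \cite{HKM21}, and to treat the two implications separately. \emph{The easy implication --- that a combination of moves from $\mathcal A$ does not change the equivalence class --- is checked move by move} by producing, for each move, the corresponding slice-by-slice ambient isotopy of $\mathbb{R}^4=\mathbb{R}^3\times\mathbb{R}$: the cup ($M_1$) and cap ($M_2$) moves introduce or delete a cancelling pair (a trivial disk together with a band); the band slide ($M_3$) and band swim ($M_4$) are handle slides of a band over a band, respectively over a trivial disk; and the three intersection moves slide a vertex disk $D_v$ past the edge of a band ($M_5$), slide $D_v$ over a band ($M_6$), or sweep the core of a band through $D_v$ ($(x)$), each being the trace of an isotopy of an embedded disk, resp.\ band, in a complement, which extends to an ambient isotopy; $M_0$ is an isotopy by definition. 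One also records here that, by \cite{KSS82}, the surface-link $\Sigma$ attached to $(L,\sigma,B)$ is independent of the auxiliary trivial disks $\Delta_j,\Delta_k',D_v$, so the construction is well defined.

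\emph{For the converse}, let $\Sigma_0,\Sigma_1$ be equivalent immersed surface-links presented by singular banded unlinks $(L^0,\sigma^0,B^0)$ and $(L^1,\sigma^1,B^1)$. Realize the equivalence by an ambient isotopy $\{\Phi_s\}_{s\in[0,1]}$ of $\mathbb{S}^4$ with $\Phi_0=\mathrm{id}$ and $\Phi_1(\Sigma_0)=\Sigma_1$, and set $F_s=\Phi_s(\Sigma_0)$. Since an ambient isotopy carries double points to double points, every $F_s$ is self-transverse with a double-point set of the same finite cardinality, and $\mathrm{Dbl}=\bigcup_{s}\{s\}\times\mathrm{Dbl}(F_s)$ is a disjoint union of arcs in the $5$-manifold $[0,1]\times\mathbb{S}^4$, each projecting homeomorphically onto $[0,1]$. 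The next step is a one-parameter general-position (Cerf-theoretic) argument: perturb the path $\{\Phi_s\}$ rel $s\in\{0,1\}$ so that, taken with respect to the height function $h\colon\mathbb{R}^3\times\mathbb{R}\to\mathbb{R}$, the motion picture of $F_s$ is generic for all but finitely many $s$ --- a Morse surface with distinct critical levels, with the finitely many double points at regular levels that are mutually distinct and distinct from the critical levels, and with the intermediate classical link varying only by planar isotopy and Reidemeister moves --- so that it determines a singular banded unlink $\mathcal L_s$ that is constant, up to $\mathcal A$-moves, on each interval complementary to a finite set $s_1<\dots<s_m$, and so that at each $s_i$ exactly one elementary catastrophe occurs; at $s=0$ and $s=1$ the reading $\mathcal L_s$ recovers the given diagrams (for their chosen auxiliary disks).

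\emph{The heart of the proof is the classification of the catastrophes}, which split into two kinds. Those not involving $\mathrm{Dbl}$ are exactly the ones of the embedded theory: a transposition of two critical levels (no change to $\mathcal L_s$, or an $M_0$), a birth or death of a cancelling critical pair ($M_1$ or $M_2$), and a slide of a band over a band ($M_3$) or over a trivial disk ($M_4$). The remaining ones are those in which an arc of $\mathrm{Dbl}$ crosses a wall of the handle structure: a vertex disk crossing the edge of a band ($M_5$), a vertex disk passing over a band ($M_6$), and a band whose core sweeps through a vertex disk ($(x)$). Completeness of this list rests on transversality: two arcs of $\mathrm{Dbl}$ generically miss one another in $[0,1]\times\mathbb{S}^4$, so there is no ``collision of double points'' catastrophe, and an arc of $\mathrm{Dbl}$ can be kept transverse to the locus of critical points of $h|_{F_s}$, so that a double point meeting a critical level decomposes into finitely many of the listed moves. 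Concatenating the $\mathcal A$-moves extracted at $s_1,\dots,s_m$ with the $M_0$'s from the in-between intervals then yields a sequence of moves from $\mathcal A$ carrying $(L^0,\sigma^0,B^0)$ to $(L^1,\sigma^1,B^1)$.

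The main obstacle is precisely this last step together with the preceding genericity reduction: one must run one-parameter Morse/Cerf theory for $h|_{F_s}$ while simultaneously keeping the persistent double-point locus in general position relative to every handle of the decomposition, and then verify, case by case, that no codimension-one phenomenon falls outside $\{M_1,\dots,M_6,(x)\}$. The most delicate cases are those in which a double point approaches a saddle or an extremum of the motion picture, which have to be shown to be realized through the three intersection moves.
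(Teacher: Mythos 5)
This statement is not proved in the paper at all: it is quoted as a known result of Hughes, Kim and Miller, with the proof living entirely in \cite{HKM21}. So there is no internal argument to compare yours against; what can be said is whether your sketch would stand on its own as a proof. It would not, because the one step that carries all the content of the theorem --- the completeness of the list of codimension-one catastrophes --- is asserted rather than established. You correctly set up the framework (the easy direction move by move, well-definedness via \cite{KSS82}, realization of the equivalence by an ambient isotopy $\{\Phi_s\}$, which for $\mathbb{S}^4$ requires the standard fact that an orientation-preserving diffeomorphism is isotopic to the identity, and the observation that the double-point locus sweeps out arcs in $[0,1]\times\mathbb{S}^4$), and this is indeed the Swenton/Kearton--Kurlin strategy \cite{Swe01, KeaKur08} that \cite{HKM21} extends. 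But the claim that ``at each $s_i$ exactly one elementary catastrophe occurs'' and that every such catastrophe involving the double-point arcs is realized by exactly one of $M_5$, $M_6$, $(x)$ is precisely the theorem; invoking ``transversality'' and ``a case-by-case verification'' names the difficulty without resolving it. In particular, the interaction of a double-point arc with births, deaths, saddles, and band-slides of the underlying Morse picture, and the justification that a general perturbation of the path of isotopies can simultaneously make the height function generic and keep the double points away from all strata except the listed ones, each require a genuine argument (this occupies a substantial part of \cite{HKM21}, which in fact works with handle decompositions of general $4$-manifolds rather than purely with Cerf theory for the height function on $\mathbb{R}^3\times\mathbb{R}$).

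A smaller but real issue: you should be explicit that reading off a singular banded unlink from a generic $F_s$ depends on choices (of trivial disks for the resolutions, and of the vertex disks $D_v$), and that different choices yield diagrams related by $\mathcal{A}$-moves; otherwise the ``constant up to $\mathcal{A}$-moves on each complementary interval'' claim is not meaningful. As a description of the architecture of the known proof your paragraph is accurate, and it is appropriate for this paper to cite the result rather than reprove it; but as a self-contained proof proposal it has a gap exactly where the theorem is hard.
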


It is natural then to ask if the set $\mathcal A$ is a minimal set, in the sense that none of these moves are a combination of other moves from the set $\mathcal A$. We answer this question by showing that the set $\mathcal C = \{M_0, \ldots, M_6\}$ is a minimal generating set for $\mathcal A$.

\begin{proof}[Proof of theorem \ref{tw:mainA}]
	We will show that the move $(x)$ intersection/band swim, as in Figure \ref{michal_09} can be generated by moves from $\mathcal C$, and that each move from $\mathcal C$ is independent from other moves from $\mathcal C$.
	
	\begin{figure}[ht]
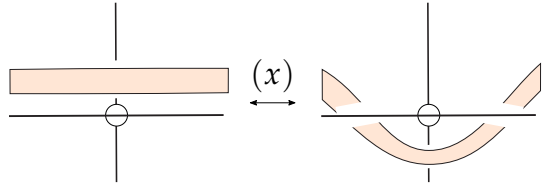

		\begin{center}
			\begin{lpic}[]{michal_09(7cm)}
				\lbl[b]{58,20;$(x)$}
			\end{lpic}
			\caption{The singular band move $(x)$.\label{michal_09}}
		\end{center}
	\end{figure}
	
	First, we introduce a useful move $M_b$ shown in Figure \ref{michal_02} together with the proof that it is a combination of moves from $\mathcal C$. We can now obtain the move ($x$) intersection/band swim as a combination of moves from the set $\{\text{spatial isotopy}, M_4, M_b\}$ as in Figure \ref{michal_12}.
	
	\begin{figure}[ht]
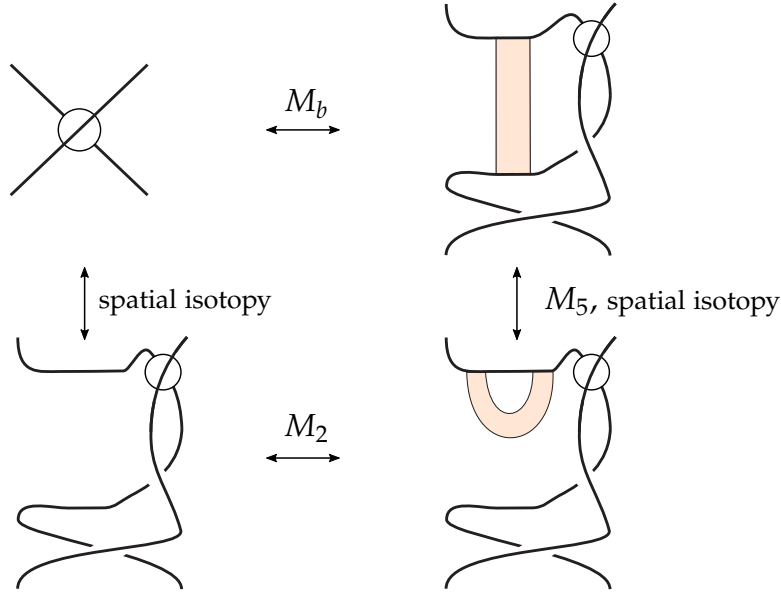

		\begin{center}
			\begin{lpic}[r(1.93cm)]{michal_02(8cm)}
				\lbl[b]{40,64;$M_b$}
				\lbl[b]{40,20;$M_2$}
				\lbl[l]{12,39; {\footnotesize spatial isotopy}}
				\lbl[l]{72,39; $M_5$, {\footnotesize spatial isotopy}}
				
			\end{lpic}
			\caption{The singular band move $M_b$.\label{michal_02}}
		\end{center}
	\end{figure}
	
	\begin{figure}[ht]
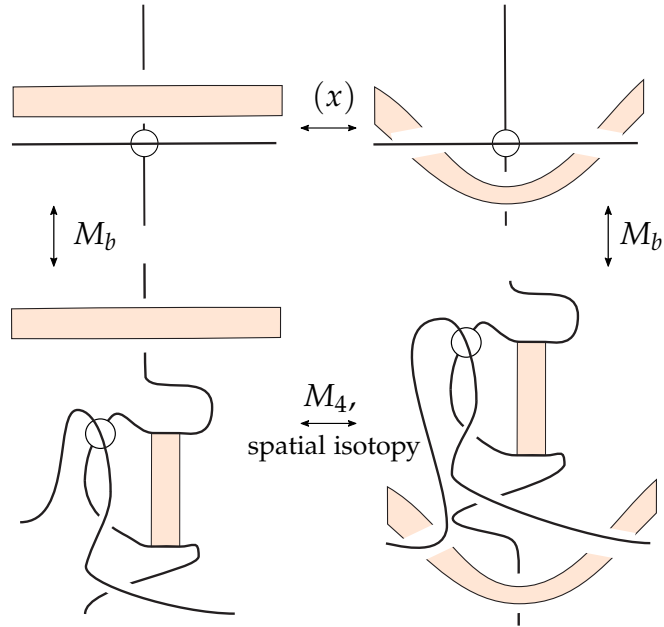

		\begin{center}
			\begin{lpic}[]{michal_12(8.5cm)}
				\lbl[b]{42,67;$(x)$}
				\lbl[b]{42,28;$M_4$, }
				\lbl[t]{42,25;{\footnotesize spatial isotopy}}
				\lbl[l]{8,51;$M_b$}
				\lbl[l]{79,51;$M_b$}
				
			\end{lpic}
			\caption{Deriving the move $(x)$.\label{michal_12}}
		\end{center}
	\end{figure}
	
	To show that each move from $\mathcal C' = \{M_0, M_1, M_2, M_3, M_4\}$ is independent from other moves from $\mathcal C$ we use the fact that $\mathcal C'$ is a minimal generating set of $\mathcal C$ proved in \cite{Jab20} and neither move from $\mathcal C'$ can be a combination of moves involving the move $M_5$ or $M_6$ because in the moves $M_5$, $M_6$ we have the number of singular vertices equal to one, before and after the move (and neither moves form $\mathcal C'$ have any singular vertices).
	\par
	It remains to show that each of the moves $M_5$ and $M_6$ is not a combination of the other moves from $\mathcal C$. We define two semi-invariants $f^k$ for $k\in\{M_5, M_6\}$ such that they preserve their values after performing each move from the set $\mathcal{C}\backslash \{k\}$, and we construct two pairs of singular links with bands $D_1^k, D_2^k$ of equivalent immersed surface-links such that $f^k(D_1^k)\not=f^k(D_2^k)$.
	\par
	Define $f^{M_5}$ and $f^{M_6}$ as an ambient isotopy class of classical links obtained from singular links with bands by changing all bands and singular vertices as shown in Figure \ref{michal_08} respectively, modulo the split sum with trivial knots. It is now not difficult to directly see that after performing the transformations on moves from $\mathcal C$ the property that the semi-invariants preserve their values after performing each move from the rest of the moves from the set $\mathcal{C}$.
	
	\begin{figure}[h]
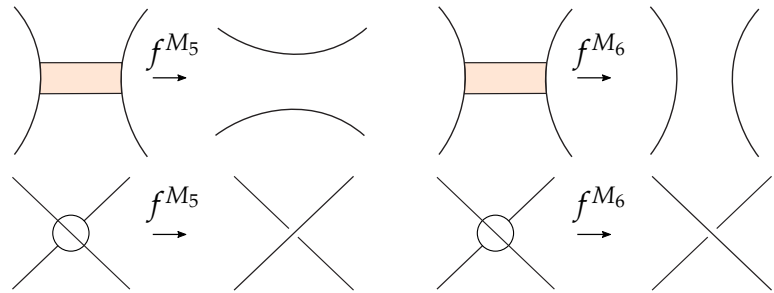

		\begin{center}
			\begin{lpic}[]{michal_08(10cm)}
				\lbl[b]{19,26;$f^{M_5}$}
				\lbl[b]{19,8;$f^{M_5}$}
				\lbl[b]{69,26;$f^{M_6}$}
				\lbl[b]{69,8;$f^{M_6}$}
			\end{lpic}
			\caption{Defining semi-invariants $f^{M_5}$ and $f^{M_6}$.\label{michal_08}}
		\end{center}
	\end{figure}
	
	It remains to show the diagrams $D_1^{M_5}, D_2^{M_5}$ and $D_1^{M_6}, D_2^{M_6}$ with the desired property explained above. They are illustrated in Figure \ref{michal_13}, where we have that $f^{M_5}(D_2^{M_5})$ and $f^{M_6}(D_1^{M_6})$ are ambient isotopy types of Hopf-links (i.e. nontrivial links), on the other hand, $f^{M_5}(D_1^{M_5})$ and $f^{M_6}(D_2^{M_6})$ are ambient isotopy types of trivial classical links.
	
	\begin{figure}[h]
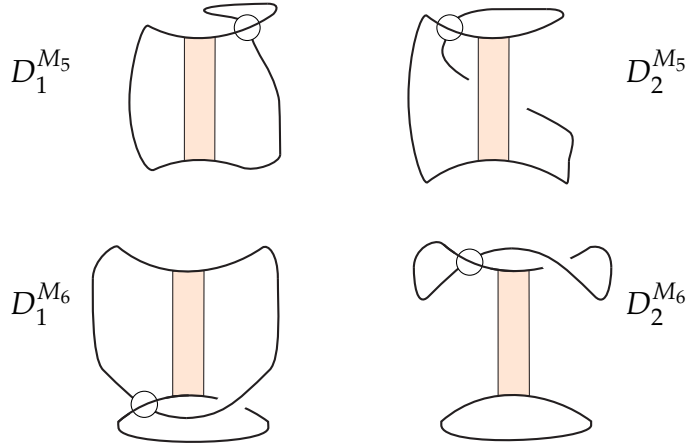

		\begin{center}
			\begin{lpic}[l(1.1cm),r(1.1cm)]{michal_13(6.85cm)}
				\lbl[r]{-2,40;$D_1^{M_5}$}
				\lbl[r]{-2,15;$D_1^{M_6}$}
				\lbl[l]{57,40;$D_2^{M_5}$}
				\lbl[l]{57,15;$D_2^{M_6}$}
			\end{lpic}
			\caption{The diagrams $D_1^{M_5}, D_2^{M_5}$ and $D_1^{M_6}, D_2^{M_6}$.\label{michal_13}}
		\end{center}
	\end{figure}
	
\end{proof}

\subsection{Immersed surface-link exterior}

Fix an immersed surface-link $F$ embedded in a manifold $\mathbb{S}^4$. For an open neighborhood, denoted $N(F)$, the \emph{exterior} of $F$ is $E(F):=\mathbb{S}^4\backslash N(F)$. If two singular surface-links are equivalent, then their exteriors are diffeomorphic. From a singular banded-unlink diagram of a surface-link $F$ we can create a Kirby diagram of the exterior of $F$ as follows (for the saddle point case see \cite[sec. 6.2]{GomSti99} and also \cite{Mar09} with swapped pre- and post-knot, for the singular point case see \cite[sec. 5.2]{Akb16} and also \cite{HKM21}).

\begin{proposition}\label{prop1}
	If $(L,\sigma, B)$ denotes a singular banded unlink decomposition of $F$ then a Kirby diagram of $E(F)$ can be created by decorating each component of $L$ with a dot, and replacing each band with a $0$-framed unknot linked with $L$ near the places of the band attachments to $L$, and replacing each singular point of $L$ as in Figure \ref{michal_10}.
	
\end{proposition}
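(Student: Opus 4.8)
The plan is to build the Kirby diagram of $E(F)$ from the singular banded unlink $(L,\sigma,B)$ by reading off handle attachments from the motion picture of $\Sigma$ given by the explicit cross-section formula, and then simplify the resulting handlebody. First I would recall the standard dictionary in the embedded case: a banded unlink for an embedded surface $F$ gives a handle decomposition of $S^4$ rel $N(F)$, where the trivial unlink $L_-$ bounding disks $\Delta_k'$ furnishes $0$- and $1$-handles (equivalently, after turning the handlebody upside down, the $L_-$ components become dotted circles specifying $1$-handles of the exterior), each band $b_i$ contributes a $2$-handle attached along a $0$-framed meridional unknot encircling the two feet of the band, and the cap disks $\Delta_j$ bounding $L_{B+}$ close everything up with $3$- and $4$-handles that leave no trace in the Kirby diagram (cf. \cite[sec.~6.2]{GomSti99}, \cite{Mar09}). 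The only genuinely new ingredient is the singular point, so the bulk of the work is the local model at a vertex $v$ of $L$.

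Next I would analyze that local model. Near a singular point of $\Sigma$ the two sheets are the coordinate planes $\mathbb{R}^2\times 0$ and $0\times\mathbb{R}^2$ inside a $4$-ball, and in the motion picture this double point appears precisely at the level $t=-1/2$ where the cross-section changes from $L_+$ to $L_-$ through the bigon disk $D_v$, whose interior meets $L$ transversely in one point. I would compute the exterior of this local piece directly: removing a neighborhood of the cone on the Hopf link (which is what the self-transverse double point looks like) from the $4$-ball, and matching it to the rest of the motion picture. This identifies the local contribution as the replacement shown in Figure \ref{michal_10} — a clasp or Hopf-linking pattern between the dotted component(s) coming from $L_-$ and the $0$-framed $2$-handle circles coming from the adjacent bands, together with the decoration recording the co-orientation in $\sigma$. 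Here I would invoke \cite[sec.~5.2]{Akb16} and \cite{HKM21} for the verification that this local picture is the correct handle description of the immersed point's exterior.

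Finally I would assemble the global diagram: decorate each component of $L$ (that is, of $L_-$, which is an unlink) with a dot to record the $1$-handles of $E(F)$; for each band in $B$ add a $0$-framed unknot clasping $L$ at the two band feet, giving the $2$-handles; and splice in the local modification of Figure \ref{michal_10} at each vertex. Independence of all the trivializing choices (the disks $\Delta_j$, $\Delta_k'$, $D_v$) follows from the already-cited fact that the singular surface-link type of $\Sigma$ is choice-independent (cf.~\cite{KSS82}), hence so is the diffeomorphism type of $E(F)$, and any two such Kirby diagrams differ by Kirby moves. The main obstacle I anticipate is the vertex analysis: one must be careful that the co-orientation conventions built into $\sigma$ (which arc is pushed to the positive side, hence which resolution $L_\pm$ appears above/below $t=-1/2$) are matched correctly with the over/under data in the clasp of Figure \ref{michal_10}, since an error there flips a crossing and changes the framing bookkeeping of the neighboring $2$-handles.
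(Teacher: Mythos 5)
Your proposal is correct and follows essentially the same route as the paper: the paper gives no independent argument for Proposition \ref{prop1} beyond assembling the standard handle dictionary for the band/saddle case from \cite[sec.~6.2]{GomSti99} and \cite{Mar09} with the local model of the transverse double point from \cite[sec.~5.2]{Akb16} and \cite{HKM21}, which is exactly the decomposition you describe. Your additional remarks on choice-independence of the trivializing disks via \cite{KSS82} and on matching the co-orientation conventions at the vertex are consistent with, and slightly more explicit than, what the paper records.
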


\begin{figure}[h]
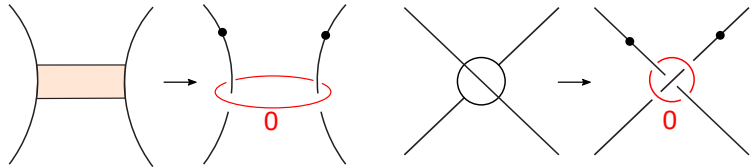

	\begin{center}
		\begin{lpic}[]{michal_10(9.75cm)}
			
		\end{lpic}
		\caption{A diagram for the exterior of a band and a singular point.\label{michal_10}}
	\end{center}
\end{figure}

\begin{remark}
	We also remark that the complements of two equivalent immersed surfaces can be transformed one another by a Kirby calculus not requiring $(1-1)$-handle or $(2-1)$-handle slides, see Figure \ref{michal_04}. This gives a potential room for a stronger immersed surface invariant (than the one requiring invariance of all the moves). If one finds the $4$-manifold invariant that preserves all handle attachment map isotopies, handle slides, and handle pair cancellations/creations except $(1-1)$-handle or $(2-1)$-handle slides then it will induce an invariant of immersed surface ambient isotopy.
\end{remark}

\begin{figure}[h!t]
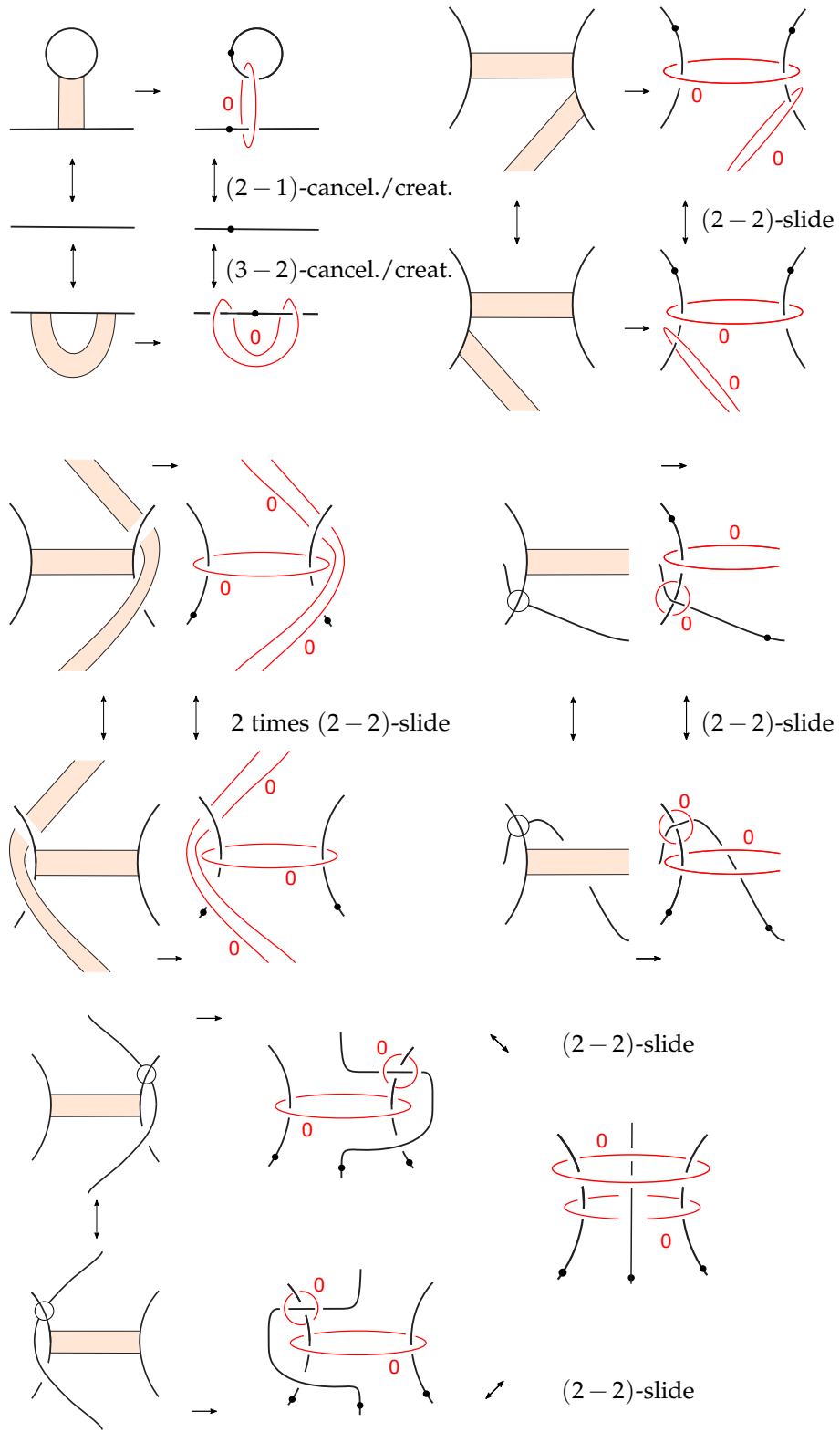

	\begin{center}
		\begin{lpic}[]{michal_04(11.5cm)}
			\lbl[l]{34,197;\footnotesize$(2-1)$-cancel./creat.}
			\lbl[l]{34,184;\footnotesize$(3-2)$-cancel./creat.}
			\lbl[l]{109,192;\footnotesize$(2-2)$-slide}
			\lbl[l]{35,112;\footnotesize$2$ times $(2-2)$-slide}
			\lbl[l]{109,112;\footnotesize$(2-2)$-slide}
			\lbl[l]{87,61;\footnotesize$(2-2)$-slide}
			\lbl[l]{87,6;\footnotesize$(2-2)$-slide}
		\end{lpic}
		\caption{Singular band moves and their corresponding Kirby moves.\label{michal_04}}
	\end{center}
\end{figure}

\section{Singular marked graph diagrams}\label{sec3}

A \emph{singular marked graph diagram} is a planar $4$-valent graph embedding, with the vertices decorated either by a classical crossing, marker or a singular decoration. For a singular marked graph diagram $D$, we denote by $L_{B+}(D)$ and $L_-(D)$ the classical link diagrams obtained from $D$ by transforming every vertex as presented in Fig.\;\ref{michal_06} for $+3/4$ and $-3/4$ case respectively.

\begin{figure}
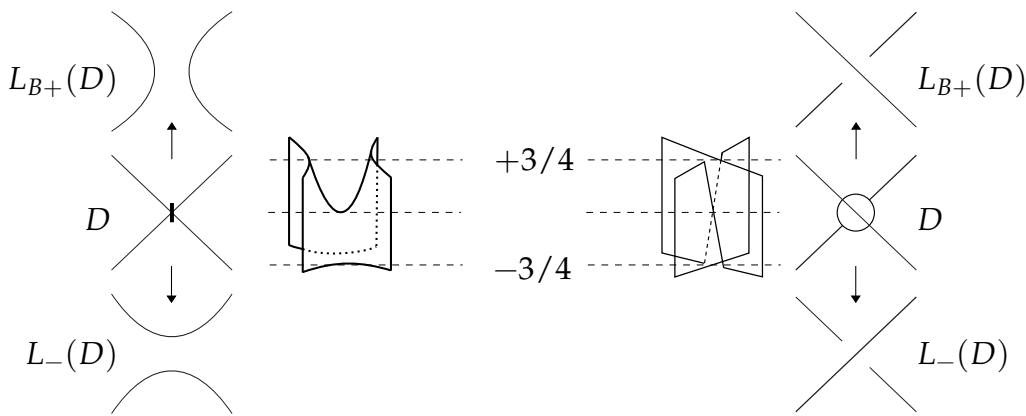

	\begin{center}
		\begin{lpic}[r(1.05cm),l(1.05cm)]{michal_06(10.6cm)}
			\lbl[l]{105,40;$-3/4$}
			\lbl[l]{105,70;$+3/4$}
			\lbl[r]{2,16;$L_-(D)$}
			\lbl[r]{0,54;$D$}
			\lbl[r]{2,92;$L_{B+}(D)$}
			\lbl[l]{221,16;$L_-(D)$}
			\lbl[l]{221,54;$D$}
			\lbl[l]{221,92;$L_{B+}(D)$}
		\end{lpic}
		\caption{A neighborhood in the four-space of a marker and a singular point.\label{michal_06}}
	\end{center}
\end{figure}

We call $L_{B+}(D)$ and $L_-(D)$ the \emph{positive resolution} and the \emph{negative resolution} of $D$, respectively.
\par
Any abstractly created singular marked graph diagram is an \emph{admissible diagram} if and only if its both resolutions are trivial classical link diagrams (see an example of an admissible diagram of an immersed sphere in Figure \ref{michal_14}).

\begin{figure}[h]
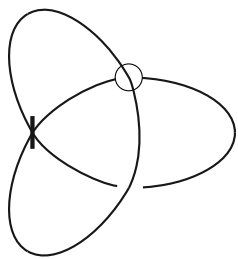

	
	\begin{center}
		\begin{lpic}[]{michal_14(3cm)}
			
		\end{lpic}
		\caption{An example.\label{michal_14}}
	\end{center}

\end{figure}

\begin{figure}[ht]
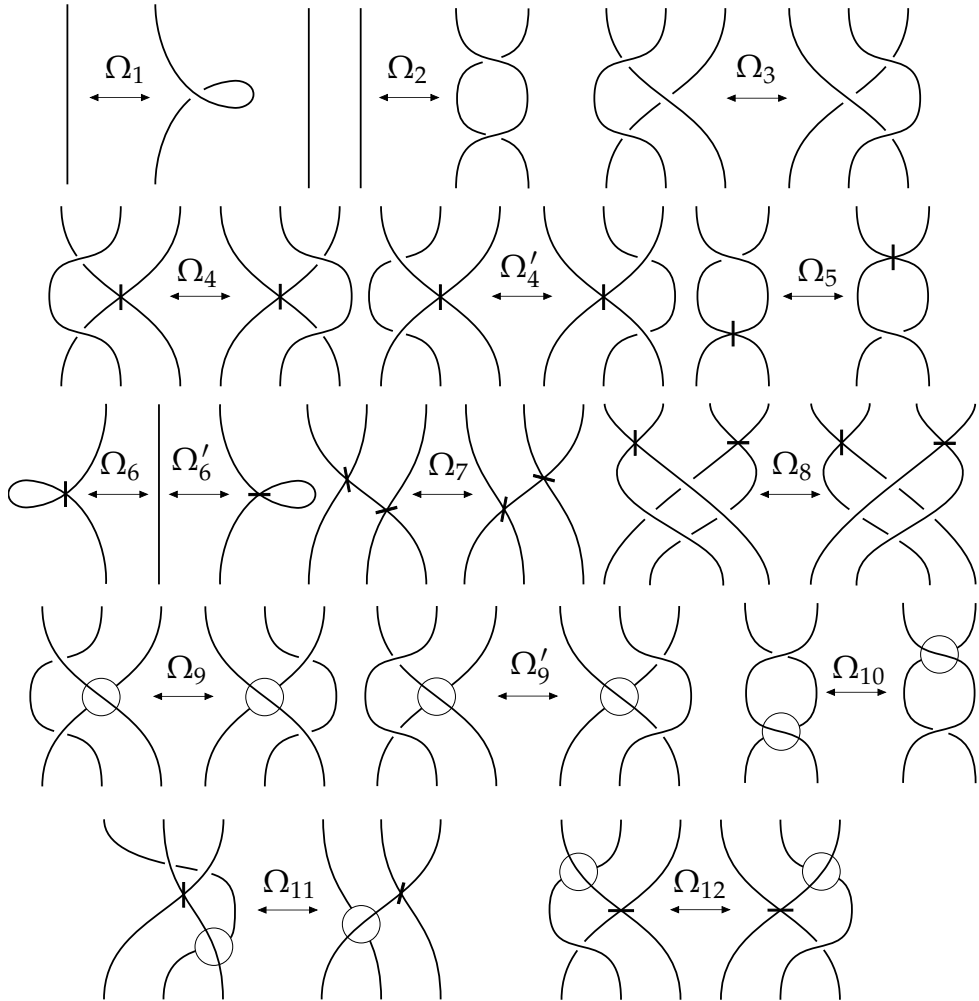

	\begin{center}
		\begin{lpic}[]{michal_05(12.75cm)}
			\lbl[b]{23,180;$\Omega_1$}
			\lbl[b]{78,180;$\Omega_2$}
			\lbl[b]{146,180;$\Omega_3$}
			\lbl[b]{37,140;$\Omega_4$}
			\lbl[b]{100,140;$\Omega_4'$}
			\lbl[b]{158,140;$\Omega_5$}
			\lbl[b]{22,102;$\Omega_6$}
			\lbl[b]{36,102;$\Omega_6'$}
			\lbl[b]{86,102;$\Omega_7$}
			\lbl[b]{153,102;$\Omega_8$}
			\lbl[b]{35,62;$\Omega_9$}
			\lbl[b]{102,62;$\Omega_9'$}
			\lbl[b]{166,62;$\Omega_{10}$}
			\lbl[b]{55,20;$\Omega_{11}$}
			\lbl[b]{135,20;$\Omega_{12}$}
		\end{lpic}
		\caption{A generating set of singular marked graph planar moves.\label{michal_05}}
	\end{center}
\end{figure}

\begin{figure}[ht]
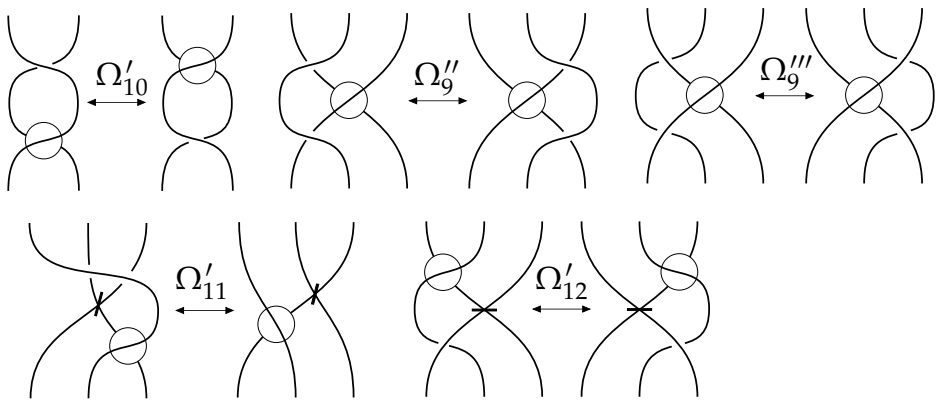

	\begin{center}
		\begin{lpic}[]{michal_05b(12.2cm)}
			\lbl[b]{23,61;$\Omega_{10}'$}
			\lbl[b]{86,61;$\Omega_{9}''$}
			\lbl[b]{156,62;$\Omega_{9}'''$}
			\lbl[b]{39,20;$\Omega_{11}'$}
			\lbl[b]{111,20;$\Omega_{12}'$}
		\end{lpic}
		\caption{Other singular marked graph planar moves.\label{michal_05b}}
	\end{center}
\end{figure}

\subsection{Singular marked graph diagram moves}

We propose the set of moves\\ $\mathcal D=\{\text{planar isotopy, }\Omega_1, \ldots, \Omega_{12}, \Omega_4',\Omega_6', \Omega_9',\Omega_{11}',\Omega_{12}'\}$ on admissible diagrams, shown in Figure \ref{michal_05}, where the set of moves $\{\Omega_1, \ldots, \Omega_8, \Omega_4',\Omega_6'\}$ agrees with the Yoshikawa moves, and introduce new $\{\Omega_9, \Omega_9',\Omega_{10},\Omega_{11},\Omega_{11}',\Omega_{12},\Omega_{12}'\}$ moves and prove that two surface-links immersed in $\mathbb{R}^4$ are equivalent if and only if, their singular marked graph diagrams are related by a combination of moves from the set $\mathcal D$.
\begin{figure}
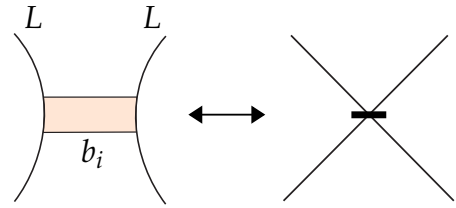

	\begin{center}
		\begin{lpic}[t(0.5cm),b(0.2cm)]{michal_07(5.8cm)}
			\lbl[t]{12,10;$b_i$}
			\lbl[b]{3,27;$L$}
			\lbl[b]{21,27;$L$}
		\end{lpic}
	\end{center}
	\caption{The band--vertex correspondence.}\label{michal_07}
	
\end{figure}

\begin{proof}[Proof of theorem \ref{tw:mainB}]
	
	By an ambient isotopy of $\mathbb R^3$, we shorten the bands of a singular link with bands $LB$ so that each band is contained in a small $2$-disk. Replacing the neighborhood of each band with the neighborhood of a $4$-valent marked vertex as in Fig.\;\ref{michal_07}, we obtain a singular marked graph. The set of Yoshikawa moves $\{\Omega_1, \ldots, \Omega_8, \Omega_4',\Omega_6'\}$ already generates all banded unlink moves when the singular marked graph is projected to the plane (see \cite{KeaKur08}). Then we consider a singular vertex also as a $4$-valent vertex, and use planar moves for a spatial rigid-vertex isotopy of \cite{Kau89} to translate the rigid-vertex isotopy of singular banded unlink moves involving a singular vertex to the corresponding planar moves.
	
	\begin{figure}[ht]
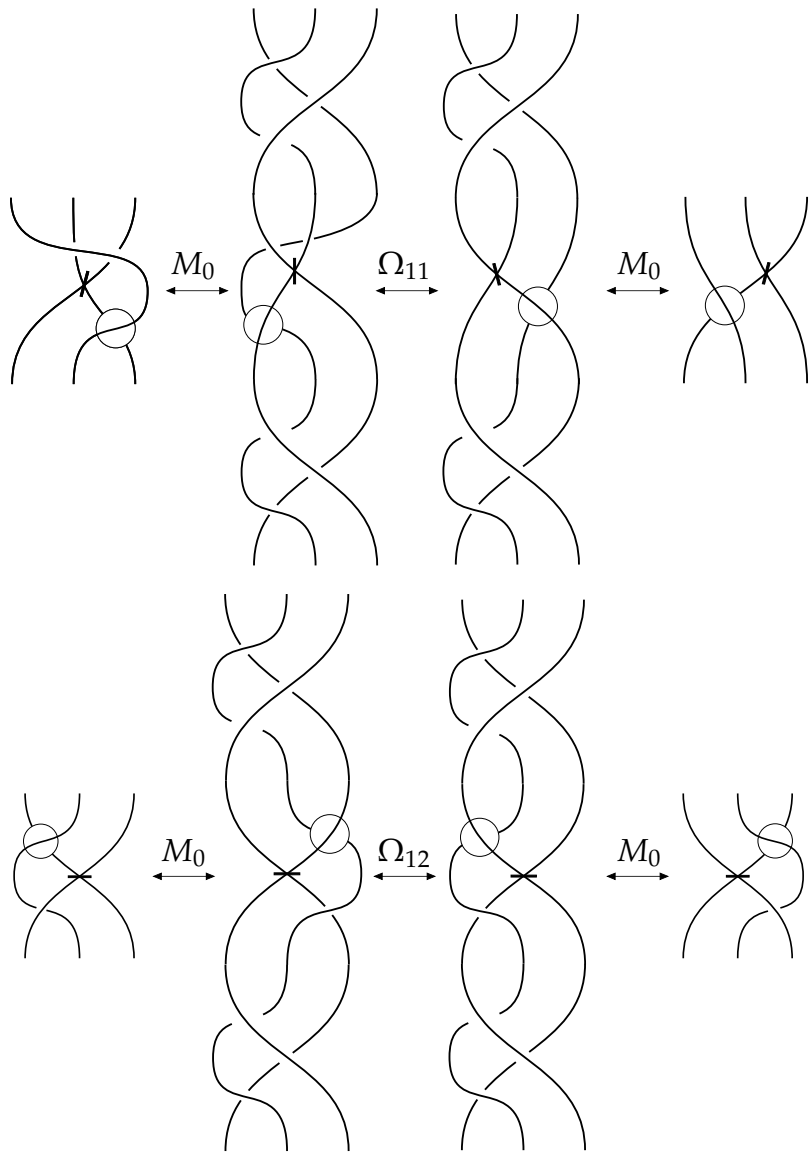

		\begin{center}
			\begin{lpic}[]{michal_05c(10.5cm)}
				\lbl[b]{39,187;$M_0$}
				\lbl[b]{84,187;$\Omega_{11}$}
				\lbl[b]{134,187;$M_0$}
				\lbl[b]{37,61;$M_0$}
				\lbl[b]{84,61;$\Omega_{12}$}
				\lbl[b]{134,61;$M_0$}
			\end{lpic}
			\caption{Deriving moves $\Omega_{11}'$ and $\Omega_{12}'$.\label{michal_05c}}
		\end{center}
	\end{figure}
	
	This correspondence gives us the moves $\Omega_9, \Omega_9',\Omega_9'',\Omega_9''',\Omega_{10},\Omega_{10}'$ shown in Figures \ref{michal_05} and \ref{michal_05b}. The remaining non-spatial moves of singular banded unlink $M_5$ and $M_6$ translate directly to the moves $\Omega_{11},\Omega_{11}'$ and $\Omega_{12},\Omega_{12}'$ respectively. Moreover, moves in Figure \ref{michal_05b} can be generated by the moves in Figure \ref{michal_05} as follows. The move $\Omega_{10}'$ can be obtained as a combination of moves from $\{\Omega_1, \Omega_9, \Omega_9', \Omega_{10}\}$ as in \cite[p. 62, Fig. 3.14]{Kam17} replacing the role there of a marker with a singular point. The same argument and figure show that the moves $\Omega_9''$ and $\Omega_9'''$ can be generated by the sets $\{\Omega_2, \Omega_9'\}$ and $\{\Omega_2, \Omega_9\}$ respectively. For the moves $\Omega_{11}'$ and $\Omega_{12}'$ involving both marked and singular vertices we need new arguments, the corresponding proofs are presented in Figure \ref{michal_05c}, where $M_0$ is a move generated by $\{\Omega_1, \Omega_2, \Omega_3, \Omega_4, \Omega_4',\Omega_9, \Omega_9',\Omega_{10}\}$ (i.e. corresponding to a spatial isotopy).
	
\end{proof}

\section{Group of an immersed surface-link}\label{sec4}

The \emph{group} of an immersed surface-link $F$ is $\pi_1(E(F))$ (i.e. the fundamental group of its complement in $\mathbb{R}^4$), the group is finitely presented. It is well-known (\cite{GomSti99}, \cite{Akb16}) that we can read off, the presentation for $\pi_1(X)$ from a Kirby diagram (with $0$-framed $2$-handles) for the $4$-manifold $X$, namely the generators correspond to dotted circles, and relators are words in terms of generators read as you travel along framed components. When we intersect the disc bounded by the dotted circle we add the letter corresponding to that generator with the exponent equal to $\pm$ depending on from which side of the disc we approach the intersection. Therefore, from this fact and Proposition \ref{prop1} we can deduce the following.

For a singular marked graph diagram, we define its \emph{abstract orientation} of strands of the diagram (regardless of the orientability of the immersed surface-link) as a fixed orientation of its negative resolution.

\begin{proposition}
	For any singular marked graph diagram $D$ of an immersed surface-link $F$ with an abstract orientation of its strands, the group of $F$ is generated by the connected components of the negative resolution of $D$ and the relations presented in Figure \ref{michal_15}.
\end{proposition}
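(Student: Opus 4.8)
The plan is to read the group presentation directly off the Kirby diagram of the exterior $E(F)$ furnished by Proposition \ref{prop1}, and then translate that Kirby-diagram reading into a planar, diagrammatic recipe on the singular marked graph diagram $D$. First I would recall the standard fact invoked in the paragraph preceding the statement: for a $4$-manifold $X$ presented by a Kirby diagram with only $1$-handles (dotted circles) and $0$-framed $2$-handles, $\pi_1(X)$ has a Wirtinger-type presentation whose generators are the dotted circles and whose relators are the words obtained by traversing each $0$-framed $2$-handle component and recording, with sign $\pm 1$, each passage through a disk bounded by a dotted circle (the sign determined by the side from which the $2$-handle crosses the disk). By Proposition \ref{prop1}, the exterior $E(F)$ is presented by such a diagram: the dotted circles are the components of $L$ (with $L_-$ the relevant classical link, since the construction resolves bands positively but the dotted circles come from $L$ with $\sigma$ — here is the first subtlety to pin down, namely which resolution the dotted circles correspond to; the definition of the abstract orientation as the orientation of the negative resolution is a hint that $L_-(D)$ is the correct bookkeeping object), the $0$-framed $2$-handles are the unknots dual to the bands, and the singular points contribute the local picture of Figure \ref{michal_10}.

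Next I would organize the translation in two local pieces, mirroring the two rows of Figure \ref{michal_10} and the two rows of Figure \ref{michal_15}. For a band/marked vertex: the dual $0$-framed unknot links $L$ near the two band feet, so traversing it reads a relator of the form (letter of the strand at one foot)$^{\pm1}$(letter of the strand at the other foot)$^{\mp1}$ — i.e. the two strands entering a marked vertex are identified up to the conjugation forced by how the unknot threads the nearby dotted circles. This is exactly the classical relation one gets for an embedded marked graph diagram; I would cite that this reproduces the known presentation of the group of an embedded surface-link, and emphasize that the connected components of $L_-(D)$ (equivalently of the negative resolution of $D$) are precisely the generators because the marked-vertex relations are what glue strands of $D$ into those components. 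For a singular vertex: here the local Kirby picture of Figure \ref{michal_10} is more elaborate — a double point of the immersion contributes, via the linked circle(s) of that figure, a commutator-type relator expressing that the meridian of one sheet commutes (or conjugates in a prescribed way) with the meridian of the transverse sheet. I would compute this one relator by tracing the $0$-framed component(s) in Figure \ref{michal_10} through the dotted disks, exactly as in the band case, and record the resulting word; this is the content of the singular-vertex row of Figure \ref{michal_15}.

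Then I would assemble: the generators are one letter per connected component of the negative resolution of $D$ (after imposing the marked-vertex identifications, or equivalently before, if we take the components as already-merged strands — I would state it in the cleaner "after merging" form matching the Proposition), and the complete set of relators is the union over all vertices of $D$ of the local relators just described: a conjugation relation at each classical crossing (ordinary Wirtinger), the band/marked-vertex relation at each marker, and the singular commutator-type relation at each singular vertex — precisely the list in Figure \ref{michal_15}. Finiteness of the presentation is immediate since $D$ has finitely many vertices. To argue that no further relations are needed, I would invoke that the Kirby diagram of Proposition \ref{prop1} faithfully presents $E(F)$ together with the handle-chasing algorithm for $\pi_1$ from a handle decomposition with no $1$-handles beyond the dotted circles; alternatively one can appeal to van Kampen applied to the motion picture / handle decomposition of $E(F)$ built from $D$.

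The main obstacle I expect is bookkeeping the signs and the conjugating words correctly at the singular vertex: in Figure \ref{michal_10} the exterior of a double point is a genuinely $4$-dimensional local model, and reading which dotted disks the dual $2$-handle(s) pass through, and with which orientation, is where an error would creep in; getting the commutator (versus a longer conjugation) right, consistently with the chosen abstract orientation coming from $L_-(D)$, is the delicate step. A secondary, more cosmetic difficulty is making sure the statement is insensitive to the Reidemeister-type choices in $D$ — but that is exactly guaranteed by Theorem \ref{tw:mainB} together with the fact that each move in $\mathcal D$ induces a Tietze transformation of the resulting presentation, so once the local relators are correctly identified, invariance is automatic and need only be remarked upon rather than reproved in detail.
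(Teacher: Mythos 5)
Your proposal takes essentially the same route as the paper: the paper gives no separate proof of this proposition, deriving it in one sentence from Proposition \ref{prop1} together with the standard rule for reading $\pi_1$ off a Kirby diagram with dotted circles and $0$-framed $2$-handles, which is precisely your strategy. Your additional bookkeeping (identifying the generators with components of the negative resolution and tracing the local relators at markers, crossings, and singular vertices through Figure \ref{michal_10}) just spells out the details the paper leaves implicit, and is consistent with the relations recorded in Figure \ref{michal_15}.
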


\begin{figure}[h]
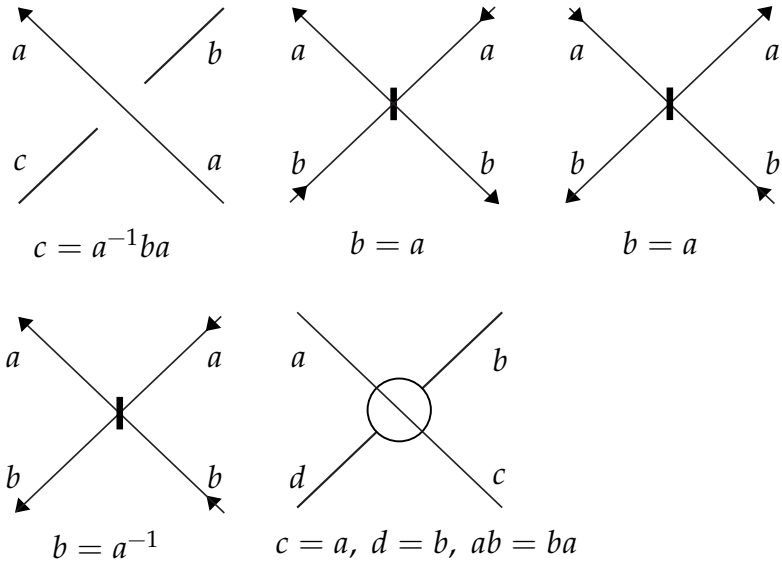

	\begin{center}
		\begin{lpic}[b(0.8cm),l(0.2cm),r(0.2cm)]{michal_15(10cm)}
			\lbl[r]{2,75;$a$}
			\lbl[l]{31,75;$b$}
			\lbl[r]{47,75;$a$}
			\lbl[l]{75,75;$a$}
			\lbl[r]{92,75;$a$}
			\lbl[l]{121,75;$a$}
			\lbl[r]{2,57;$c$}
			\lbl[l]{31,57;$a$}
			\lbl[r]{47,57;$b$}
			\lbl[l]{75,57;$b$}
			\lbl[r]{92,57;$b$}
			\lbl[l]{121,57;$b$}
			\lbl[l]{3,44;$c=a^{-1}ba$}
			\lbl[l]{54,44;$b=a$}
			\lbl[l]{98,44;$b=a$}
			\lbl[r]{1,25;$a$}
			\lbl[l]{31,25;$a$}
			\lbl[r]{47,25;$a$}
			\lbl[l]{77,25;$b$}
			\lbl[r]{1,6;$b$}
			\lbl[l]{31,6;$b$}
			\lbl[r]{47,6;$d$}
			\lbl[l]{77,6;$c$}
			\lbl[l]{6,-5;$b=a^{-1}$}
			\lbl[l]{42,-5;$c=a,\; d=b,\; ab=ba$}
		\end{lpic}
	\end{center}
	\caption{Relations near crossings and vertices.}\label{michal_15}
	
\end{figure}

\section{Oriented moves for oriented surface-links}\label{sec5}

We will introduce a generating set of moves on oriented singular marked diagrams.

\begin{theorem}
	Two oriented immersed surface-links $\mathcal{L}_1$ and $\mathcal{L}_2$  are equivalent if and only if their singular marked diagrams are related by a finite sequence of moves $\Gamma_1, \ldots, \Gamma_{12}d$ shown in Figures \ref{michal_oriA}--\ref{michal_oriE}.
\end{theorem}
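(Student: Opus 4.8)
The plan is to derive the oriented moves $\Gamma_1,\ldots,\Gamma_{12}d$ from the unoriented generating set $\mathcal D$ of Theorem~\ref{tw:mainB} by the standard ``orientation refinement'' procedure, exactly as Yoshikawa's oriented moves are obtained from his unoriented ones in the embedded case. First I would fix compatible orientations on the strands of an oriented singular marked graph diagram: since the immersed surface-link $\mathcal L_i$ is oriented, each sheet carries a coherent orientation, and the negative resolution $L_-(D)$ inherits an orientation from it; the markers (saddles) and singular points must then be oriented so that the local sheet orientations match across the vertex. The effect is that each unoriented move in Figure~\ref{michal_05} (and the auxiliary ones in Figure~\ref{michal_05b}) splits into finitely many oriented variants according to the admissible ways of orienting the strands appearing in the local picture.

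The key steps, in order, are: (1) enumerate, for each unoriented move $\Omega_j$, the orientation-compatible local configurations, and declare these to be the oriented moves $\Gamma_k$; this produces the list $\Gamma_1,\ldots,\Gamma_{12}d$ displayed in Figures~\ref{michal_oriA}--\ref{michal_oriE}. (2) Prove the ``if'' direction: each $\Gamma_k$ is, after forgetting orientations, one of the moves of $\mathcal D$, hence by Theorem~\ref{tw:mainB} it does not change the equivalence class of the underlying immersed surface-link; and since $\Gamma_k$ is defined to respect the sheet orientations, it does not change the oriented equivalence class either. (3) Prove the ``only if'' direction: if $\mathcal L_1$ and $\mathcal L_2$ are equivalent as \emph{oriented} immersed surface-links, they are in particular equivalent as unoriented ones, so by Theorem~\ref{tw:mainB} their diagrams are joined by a sequence of moves from $\mathcal D$; I would then show this sequence can be replaced by a sequence of oriented moves. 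The standard trick is that an orientation of the initial diagram can be transported along the sequence: each unoriented move, when one of its strands is oriented, forces orientations on the other strands of the local picture (because both resolutions are links and orientations propagate), and the resulting locally-oriented move is exactly one of the $\Gamma_k$ — provided the ambient orientation of $\mathcal L_1$ was transported to $\mathcal L_2$ correctly, which is guaranteed since the equivalence is orientation-preserving on the surface.

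The main obstacle I expect is the bookkeeping at the new singular vertices: for the moves $\Omega_9,\Omega_9',\Omega_{10},\Omega_{11},\Omega_{11}',\Omega_{12},\Omega_{12}'$ involving a singular point (and the mixed marker/singular moves $\Omega_{11}',\Omega_{12}'$), one must check that the self-transversality condition (iii) of Section~\ref{sec1} together with the orientation forces a \emph{unique} orientation pattern up to the symmetries already present, so that the number of genuinely distinct oriented variants is finite and equals the count encoded in the indices $\Gamma_1,\ldots,\Gamma_{12}d$. A subtle point is that at a double point of an oriented immersed surface the two sheets need not be coherently co-oriented in $\mathbb R^4$ — there is a sign of the intersection — so each singular vertex may carry an extra binary label, and I must verify that both labels are realized by moves in the list and that the derivations in Figure~\ref{michal_05c} (deriving $\Omega_{11}',\Omega_{12}'$ from the others) lift to the oriented setting without introducing a move outside $\{\Gamma_1,\ldots,\Gamma_{12}d\}$. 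Once this finite case-check is organized — most conveniently by appealing to the unoriented result and only tracking how orientations propagate locally — the theorem follows.
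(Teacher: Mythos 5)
Your overall skeleton --- deduce the oriented statement from the unoriented Theorem \ref{tw:mainB} by transporting an orientation of the initial diagram along an unoriented move sequence and recording which oriented local variants occur --- is the same reduction the paper uses, and your treatment of the ``if'' direction and of the orientation bookkeeping at marked and singular vertices is reasonable. The gap is in your step (1): you assume the list $\Gamma_1,\ldots,\Gamma_{12}d$ is the \emph{full} enumeration of orientation-compatible variants of the unoriented moves, so that in step (3) the locally oriented move produced by orientation transport is automatically in the list. It is not: the indexing $\Gamma_1,\Gamma_1'$ (only two variants of $\Omega_1$), a single $\Gamma_2$ and a single $\Gamma_3$ already shows that Figures \ref{michal_oriA}--\ref{michal_oriC} contain a \emph{reduced} generating set of oriented variants (Polyak's minimal set of oriented Reidemeister moves, the Joung--Kim--Lee set for marked-vertex moves, and the Bataineh--Elhamdadi--Hajij--Youmans set for oriented singular crossings), not all of them. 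Consequently the essential content of the proof --- that every other oriented variant of $\Omega_1,\ldots,\Omega_{10}$ is a composition of the listed $\Gamma$'s --- is absent from your argument; this is exactly what the paper supplies by invoking \cite{Pol10}, \cite{JKL15} and \cite{BEHY18}, together with one genuinely new verification: in the derivations imported from \cite{BEHY18} the rigid singular crossing must not be rotated by $90^{\circ}$ between the initial and final pictures, since that would interchange the two local sheet types at the double point. Only for the new moves $\Gamma_{11}a,\ldots,\Gamma_{12}d$ does the paper do what you propose, namely list all admissible orientations outright.

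If you keep your approach you must either prove the generation statements for the reduced lists yourself (a finite but nontrivial case analysis, e.g.\ recovering all eight oriented $\Omega_3$ variants from the single $\Gamma_3$), or restate the theorem with the full enumeration of oriented variants, which is no longer the theorem as printed. Your concern about the intersection sign at a double point is well placed, but it is resolved differently than you suggest: the issue is not an extra binary label on the singular vertex to be carried through the move list, but precisely the $90^{\circ}$-rotation check described above.
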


\begin{figure}[ht]
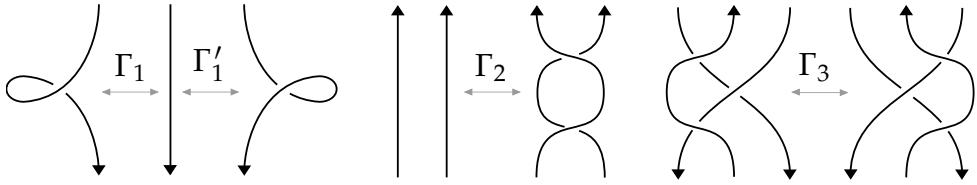

	\begin{center}
		\begin{lpic}[]{michal_oriA(12.75cm)}
			\lbl[b]{24,19;$\Gamma_1$}
			\lbl[b]{39,19;$\Gamma_1'$}
			\lbl[b]{93,19;$\Gamma_2$}
			\lbl[b]{155,19;$\Gamma_3$}
			
		\end{lpic}
		\caption{Moves, part I.\label{michal_oriA}}
	\end{center}
\end{figure}

\begin{figure}[ht]
	\begin{center}
		\begin{lpic}[]{michal_oriB(12.75cm)}
			\lbl[b]{35,60;$\Gamma_4$}
			\lbl[b]{97,60;$\Gamma_4'$}
			\lbl[b]{155,60;$\Gamma_5$}
			\lbl[b]{22,20;$\Gamma_6$}
			\lbl[b]{35,20;$\Gamma_6'$}
			\lbl[b]{83,19;$\Gamma_7$}
			\lbl[b]{149,19;$\Gamma_8$}
		\end{lpic}
		\caption{Moves, part II.\label{michal_oriB}}
	\end{center}
\end{figure}

\begin{figure}[ht]
	\begin{center}
		\begin{lpic}[]{michal_oriC(12.75cm)}
			\lbl[b]{28,20;$\Gamma_9$}
			\lbl[b]{93,20;$\Gamma_9'$}
			\lbl[b]{156,20;$\Gamma_{10}$}
		\end{lpic}
		\caption{Moves, part III.\label{michal_oriC}}
	\end{center}
\end{figure}

\begin{figure}[ht]
	\begin{center}
		\begin{lpic}[]{michal_oriD(11.5cm)}
			\lbl[b]{35,62;$\Gamma_{11}a$}
			\lbl[b]{112,62;$\Gamma_{11}b$}
			\lbl[b]{35,20;$\Gamma_{11}c$}
			\lbl[b]{112,20;$\Gamma_{11}d$}
		\end{lpic}
		\caption{Moves, part IV.\label{michal_oriD}}
	\end{center}
\end{figure}

\begin{figure}[ht]
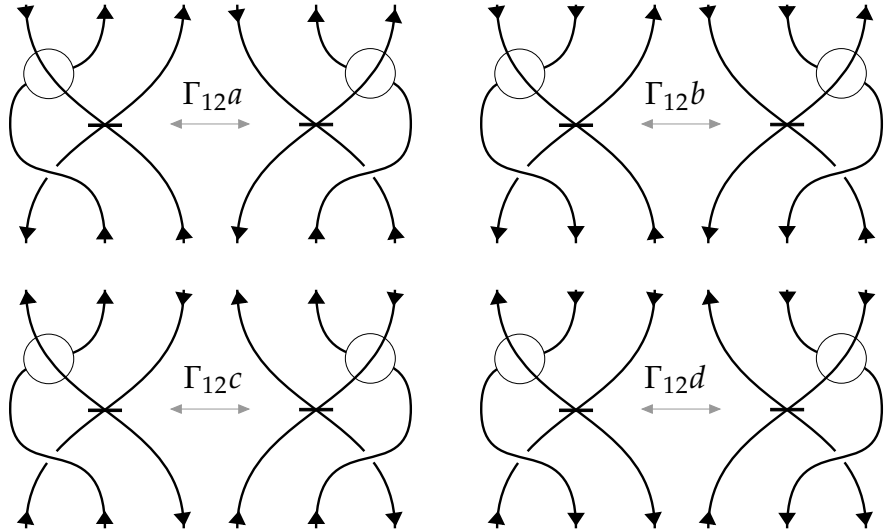

	\begin{center}
		\begin{lpic}[]{michal_oriE(11.5cm)}
			\lbl[b]{30,62;$\Gamma_{12}a$}
			\lbl[b]{98,62;$\Gamma_{12}b$}
			\lbl[b]{30,20;$\Gamma_{12}c$}
			\lbl[b]{98,20;$\Gamma_{12}d$}
		\end{lpic}
		\caption{Moves, part V.\label{michal_oriE}}
	\end{center}
\end{figure}

\begin{proof}
	The generating set of oriented Reidemeister moves in Figure \ref{michal_oriA} was introduced in \cite{Pol10}. The generating set of oriented moves on marked graph diagrams in Figure \ref{michal_oriB} was introduced in \cite{JKL15}. Here, for the move $\Gamma_6$ we switch the orientation of the strand (they can be replaced as shown also in \cite{JKL15}) to make pictures more simple and consistent with move $\Gamma_1$.
	\par
	Moves $\Gamma_9$, $\Gamma_9'$, and $\Gamma_{10}$ in Figure \ref{michal_oriC} viewed as a (classical) singular diagrams, generate all other possible orientations on this (unoriented) moves by \cite{BEHY18}. We checked, that in the process of deriving those moves the singular rigid crossing, from starting to ending picture, the singular position was not rotated by $90^{\circ}$, otherwise it will produce the other type of our singular circle.  
	\par
	To complete the proof we show now the moves $\Gamma_{11}a, \ldots, \Gamma_{12}d$ in Figures \ref{michal_oriD}--\ref{michal_oriE} that generate all possibilities of orienting the unoriented moves (i.e. the same moves but without orientation) derived earlier in this paper.
	
\end{proof}

\section{Fundamental quandle}\label{sec6}

A \emph{quandle} is a non-empty set $Q$ together with an operation $:Q\times Q\to Q$ that satisfies the following conditions:
\begin{enumerate}
	\item $\forall_{a\in Q}\;$ $a*a=a\quad\quad$ (\emph{idempotency})
	\item $\forall_{a, b\in Q}\;$ $\exists!{c\in Q};$ $c*a=b\quad\quad$ (\emph{right invertibility})
	\item $\forall_{a, b, c\in Q}\;$ $(a*b)*c=(a*c)*(b*c)\quad\quad$ (\emph{self-distributivity})
\end{enumerate}

Condition (2) can be equivalently replaced by the condition:
\begin{enumerate}[(2')]
	\item $\forall_{a\in Q};$ the transformation $* a:Q\to Q$ defined as $x\mapsto x* a$ is a bijection.
\end{enumerate}

A generalization of quandles (called biquandles) was introduced in \cite{KauRad03}. A biquandle is an algebraic structure with two binary operations satisfying certain conditions which can be presented by semi-arcs of links (or semi-sheets of surface-links) as its generators modulo oriented Reidemeister moves (or Roseman moves). In \cite{KKKL18b} S. Kamada, A. Kawauchi, J. Kim, and S. Y. Lee discussed the (co)homology theory of biquandles and developed the biquandle cocycle invariants for oriented surface-links by using broken surface diagrams generalizing quandle cocycle invariants. Then showed how to compute the biquandle cocycle invariants from marked graph diagrams.
\par

Let us fix a quandle $Q$, let $D$ be an oriented singular marked diagram of an oriented immersed surface-link, and let $\Lambda$ be the set of components of this diagram. A \emph{quandle coloring} ${\mathcal C}$ is a mapping ${\mathcal C}:\Lambda\to Q$ such that around each classical, marked or singular point, the relation shown in Figure \ref{michal_biq} holds. These conditions are consistent around each classical, marked or singular point due to the axioms for the quandle. Denote by ${Col}_Q(D)$ the set of all colorings of the diagram $D$ with quandle $Q$.

\begin{figure}[ht]
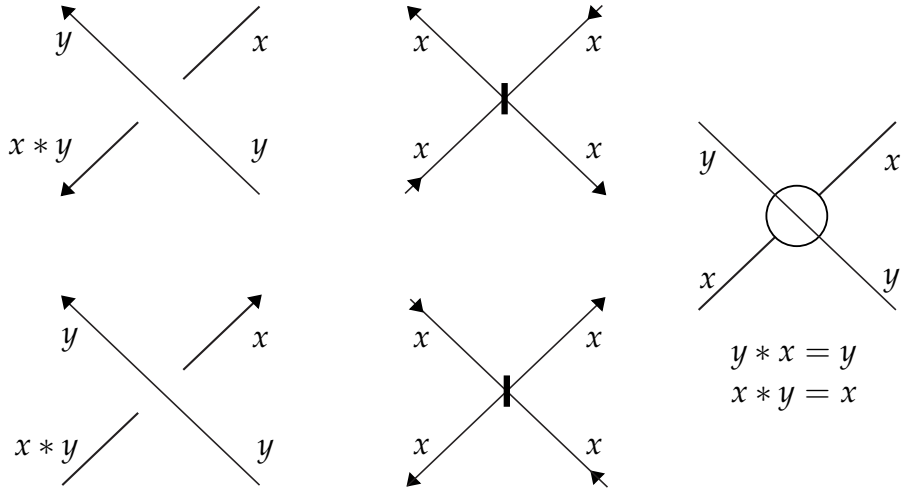

	\begin{center}
		\begin{lpic}[l(0.6cm),b(0.6cm),r(0.4cm)]{michal_biq(11cm)}
			\lbl[r]{2,75;$y$}
			\lbl[l]{32,75;$x$}
			\lbl[r]{62,75;$x$}
			\lbl[l]{88,75;$x$}
			\lbl[r]{62,25;$x$}
			\lbl[l]{88,25;$x$}
			\lbl[r]{2,57;$x*y$}
			\lbl[l]{32,57;$y$}
			\lbl[r]{62,57;$x$}
			\lbl[l]{88,57;$x$}
			\lbl[r]{62,7;$x$}
			\lbl[l]{88,7;$x$}
			\lbl[r]{3,25;$y$}
			\lbl[l]{32,25;$x$}
			\lbl[r]{3,6;$x*y$}
			\lbl[l]{33,6;$y$}
			\lbl[r]{110,55;$y$}
			\lbl[l]{138,55;$x$}
			\lbl[r]{110,35;$x$}
			\lbl[l]{138,35;$y$}
			\lbl[t]{123,24;$y*x = y$}
			\lbl[t]{123,17;$x*y = x$}
		\end{lpic}
		\caption{Labeling conditions.\label{michal_biq}}
	\end{center}
\end{figure}

\begin{proposition}
	The quandle axioms are chosen such that given a quandle coloring of one side of any $\Gamma$ type move, there is a unique quandle coloring of the other side of the move with the condition that colors agree on the boundary arcs that leaves the disc where the move is performed. 
\end{proposition}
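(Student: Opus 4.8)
The plan is to verify the claimed ``unique extension'' property move by move, organizing the $\Gamma$-moves into three groups according to what kind of vertices they involve: purely classical crossings, marked vertices, and singular vertices. For each move we will check that the coloring rule from Figure~\ref{michal_biq} imposed on the right-hand side of the move, together with prescribed colors on the boundary arcs matching those coming from a coloring of the left-hand side, has exactly one solution; by symmetry of the argument the same holds with the two sides exchanged. This reduces the proposition to a finite list of elementary checks using the quandle axioms, so the proof is essentially bookkeeping — the main point is to set up the bookkeeping so that no case is missed.

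First I would dispose of the moves built only from classical crossings, namely $\Gamma_1,\Gamma_1',\Gamma_2,\Gamma_3$ (and their oriented variants). These are the oriented Reidemeister moves, and the statement that a quandle coloring extends uniquely across them is the classical fact underlying the definition of the fundamental quandle of a classical link; it follows directly from idempotency (for $\Gamma_1$), right invertibility in the form of axiom (2') (for $\Gamma_2$, where one reads off the color on a strand crossing under and solves $x * y = z$ uniquely for the relevant unknown using bijectivity of $*y$ or its inverse), and self-distributivity (for $\Gamma_3$, which is precisely the identity $(a*b)*c = (a*c)*(b*c)$). Next come the marked-vertex moves $\Gamma_4,\Gamma_4',\Gamma_5,\Gamma_6,\Gamma_6',\Gamma_7,\Gamma_8$; here the marked vertex imposes the relations in the middle column of Figure~\ref{michal_biq}, where the two strands meeting at a marker carry equal colors. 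Across each such move the boundary colors determine the interior colors by the same combination of axioms, together with the marker relation $b = a$; this is the content of the Kamada--Kawauchi--Kim--Lee setup from \cite{JKL15}, \cite{KKKL18}, and I would simply cite that the coloring counting is move-invariant there, then note nothing new is needed.

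The genuinely new work is the singular-vertex moves: $\Gamma_9,\Gamma_9',\Gamma_{10}$, and the four-fold families $\Gamma_{11}a,\dots,\Gamma_{11}d$ and $\Gamma_{12}a,\dots,\Gamma_{12}d$. At a singular vertex the rule (right column of Figure~\ref{michal_biq}) is that the two strands pass through transversely carrying \emph{unchanged} colors, i.e.\ $x*y = x$ and $y*x = y$ — so the singular crossing behaves, for coloring purposes, like a trivial crossing of two strands. With that observation in hand, the extension check for each singular move becomes formally the same as for the corresponding classical or marked move with the singular crossing replaced by a non-interacting strand pair: for $\Gamma_9,\Gamma_9'$ (a singular analogue of a Reidemeister-II / marker slide) one solves a single equation of the form $x*y = z$ uniquely by axiom (2'); for $\Gamma_{10}$ (a singular analogue of Reidemeister III, with one rigid singular crossing sliding past an ordinary crossing) uniqueness is again forced, and consistency is where one must actually use self-distributivity in the presence of the $x*y=x$ relations. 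For $\Gamma_{11}\ast$ and $\Gamma_{12}\ast$, which mix a marked vertex with a singular vertex (these correspond to the moves $M_5$, $M_6$ of passing/sliding a vertex over a band), each of the four orientation variants is checked in turn; the colors on the few interior arcs are read off directly from the boundary data using marker equality on one vertex and color-invariance on the other.

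I expect the main obstacle to be purely organizational rather than conceptual: making sure that, for the moves appearing as four-element families $\Gamma_{11}a$--$d$ and $\Gamma_{12}a$--$d$, every orientation of every strand is genuinely covered and that the ``colors agree on the boundary arcs that leave the disc'' hypothesis is stated so that the local model really does glue back into the global diagram — in particular checking that performing the move does not create or destroy any region whose coloring was unconstrained, which would break uniqueness. A secondary subtlety is the orientation-reversal convention for $\Gamma_6$ flagged in the preceding proof (the strand orientation was switched for consistency with $\Gamma_1$); one should confirm that the coloring rule $a*b$ versus $a*\bar b$ (using the unique $c$ with $c*b=a$ from axiom (2)) is applied with the correct sign there, since a mismatched orientation convention is the one place a spurious non-uniqueness could slip in. Once those conventions are pinned down, every case closes by one of the three quandle axioms, and the proposition follows.
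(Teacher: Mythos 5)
Your proposal is correct and follows essentially the same route as the paper: cite the classical literature for the Reidemeister moves $\Gamma_1$--$\Gamma_3$ and the marked-vertex moves $\Gamma_4$--$\Gamma_8$ (the paper points to \cite{JouNel20} for the latter), and verify the remaining singular-vertex moves by a direct hand check. Your extra observation that the singular-vertex condition $x*y=x$, $y*x=y$ makes those crossings behave like non-interacting strands is exactly the reason the paper can dismiss the remaining cases as ``easily checked by hand.''
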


\begin{proof}
	The proof for the moves $\Gamma_1, \ldots, \Gamma_3$ can be found in the existing literature, the axioms for the quandle were motivated to satisfy those Reidemeister moves, for moves $\Gamma_4, \ldots, \Gamma_8$ see for example \cite{JouNel20}. We checked (easily by hand) the proof for the remaining moves.
\end{proof}

\begin{corollary}
	The number of quandle colorings $\#Col_Q(D)$ of an oriented singular marked diagram $D$ is an invariant of an oriented surface-link $\mathcal{L}$ presented by $D$, we can denote it therefore by $\#Col_Q(\mathcal{L})$.
\end{corollary}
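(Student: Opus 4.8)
The plan is to deduce the corollary directly from the preceding Proposition together with Theorem~\ref{tw:mainB} (in its oriented form, i.e.\ the theorem stating that two oriented immersed surface-links are equivalent iff their oriented singular marked diagrams are related by a finite sequence of the moves $\Gamma_1,\ldots,\Gamma_{12}d$). The only thing that needs to be verified is that $\#Col_Q(D)$ is unchanged under each single move $\Gamma$, since a finite sequence of moves then preserves it, and by the oriented version of Theorem~\ref{tw:mainB} any two diagrams of equivalent oriented surface-links are connected by such a sequence.

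First I would fix an oriented move $\Gamma$ performed inside a disc $\Delta$ in the plane, with the rest of the diagram held fixed outside $\Delta$. A coloring of $D$ restricts to a coloring of the part of $D$ outside $\Delta$, in particular to an assignment of quandle elements to the arcs of $D$ meeting $\partial\Delta$; conversely any coloring of $D$ is determined by its values on the components (connected arcs) of $D$, so it suffices to control the colorings ``compatible with a fixed boundary data on $\partial\Delta$.'' By the Proposition, for each coloring of the source side of $\Gamma$ with prescribed boundary values there is exactly one coloring of the target side with the same boundary values, and vice versa; this gives a bijection between the set of colorings of $D$ and the set of colorings of the diagram $D'$ obtained after the move, whence $\#Col_Q(D)=\#Col_Q(D')$. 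One subtle point to spell out: components of $D$ can be merged or split by a move (e.g.\ a marked-vertex move), so the correspondence is between \emph{colorings} — functions $\mathcal C:\Lambda\to Q$ satisfying the local relations — and not literally between labelings of a fixed index set; the Proposition is precisely what guarantees the local relations transfer bijectively, so the counting argument goes through regardless.

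The main obstacle — already discharged by the Proposition — is checking the ``unique extension across the move'' property for every one of the moves $\Gamma_1,\ldots,\Gamma_{12}d$, in particular for the new moves $\Gamma_9,\ldots,\Gamma_{12}d$ involving singular vertices, where one must use the idempotency, right-invertibility, and self-distributivity axioms to see that a coloring on one side forces a consistent coloring on the other. Granting that, the proof of the corollary is the short formal argument above: invariance under each generating move plus the oriented Theorem~\ref{tw:mainB} yields that $\#Col_Q(D)$ depends only on the equivalence class of $\mathcal L$, so writing $\#Col_Q(\mathcal L):=\#Col_Q(D)$ is well defined.
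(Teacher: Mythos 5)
Your proposal is correct and follows exactly the route the paper intends: the Corollary is an immediate consequence of the Proposition (unique extension of colorings across each $\Gamma$ move with fixed boundary data) combined with the oriented generating-set theorem of Section~\ref{sec5}, giving a bijection of coloring sets under each move and hence invariance of $\#Col_Q$. The paper leaves this deduction implicit, and your write-up supplies the same standard argument, including the correct observation that one compares colorings as functions rather than labelings of a fixed arc set.
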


\begin{example}
	
	Let us consider the Fenn--Rolfsen link $FR$, shown as the singular marked diagram in Figure \ref{fr}.  Let a quandle on $Q=\{1, 2, 3, 4\}$ be the quandle with the operation given by the following matrix.
	\renewcommand*{\arraystretch}{1.4}	
	\[
	\begin{array}{r|rrrr} 
		* & 1 & 2 & 3 & 4\\\hline
		1 & 1 & 1 & 2 & 2\\
		2 & 2 & 2 & 1 & 1\\
		3 & 4 & 4 & 3 & 3\\
		4 &	3 & 3 & 4 & 4
	\end{array}
	\]
	
	For a quandle coloring, we need an orientation. But the given quandle $Q$ is an involutory quandle so we don't need an orientation. We can enumerate that $\#Col_Q(FR)= 16$.
\end{example}

\begin{figure}[ht]
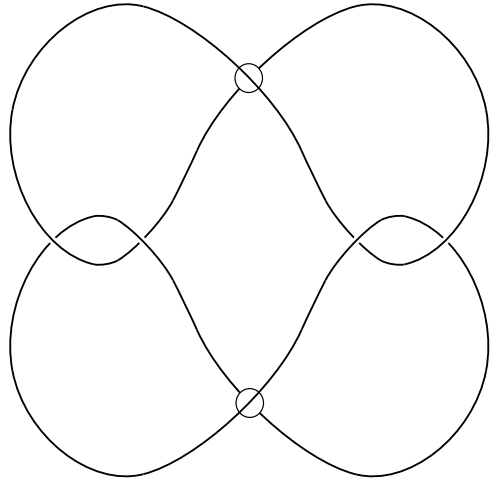

	\begin{center}
		\begin{lpic}[]{fr(7cm)}
			
		\end{lpic}
		\caption{The Fenn--Rolfsen link.\label{fr}}
	\end{center}
\end{figure}

\end{document}